\documentclass{amsart}
\usepackage[english]{babel}
\usepackage{amsthm}
\usepackage{inputenc}
\usepackage{amssymb}
\usepackage{graphicx}
\tolerance=5000 \topmargin -1cm \oddsidemargin=0,5cm
\evensidemargin=-0,2cm \textwidth 15.6cm \textheight 24cm
\linespread{1.0}
\vfuzz2pt 
\hfuzz2pt 
\newtheorem{thm}{Theorem}[section]

\newtheorem{lem}[thm]{Lemma}
\newtheorem{prop}[thm]{Proposition}
\newtheorem{defn}[thm]{Definition}
\newtheorem{exam}[thm]{Example}

\numberwithin{equation}{section}

\def\ll{\mathcal{Z}}
\begin{document}

\title[$p$-filiform Zinbiel algebras]{$p$-filiform Zinbiel algebras}%

\author{L.M. Camacho, E.M. Ca\~{n}ete, S. G\'{o}mez-Vidal, B.A. Omirov}
\address{[L.M. Camacho -- E.M. Ca\~{n}ete -- S. G\'{o}mez-Vidal] Dpto. Matem\'{a}tica Aplicada I.
Universidad de Sevilla. Avda. Reina Mercedes, s/n. 41012 Sevilla.
(Spain)} \email{lcamacho@us.es --- elisacamol@us.es  --- samuel.gomezvidal@gmail.com}
\address{[B.A. Omirov] Institute of Mathematics and Information Technologies
 of Academy of Uzbekistan, 29, F.Hodjaev srt., 100125, Tashkent (Uzbekistan)}
\email{omirovb@mail.ru}

%

\begin{abstract}
The paper deals with the classification of a subclass of finite-dimensional Zinbiel algebras: the naturally graded $p$-filiform Zinbiel algebras.
A Zinbiel algebra is the dual to Leibniz algebra in Koszul sense.
We prove that there exists, up to isomorphism, only one family of naturally graded p-filiform Zinbiel algebras under hypothesis $n-p \geq 4.$
\end{abstract}
\maketitle
\section{Introduction.}
Non associative algebras appear at the beginning of the twentieth century due to the development of quantum mechanics. As McCrimmon mentioned in \cite{McC}, physicists needed new mathematics objects different from the associative algebras, that is, objects different from complex matrix. In this direction we would highlight many important algebras as Lie, Jordan, alternative, Leibniz algebras and others.

A Leibniz algebra, as introduced by Loday \cite{loday1}, is a vector space together with a bilinear binary operation $[-,-]$ for which $[-,z]$ is a derivation for every $z$ in the vector space. Thus, Leibniz algebras are noncommutative version of Lie algebras, which are Leibniz algebras whose brackets are skew-symmetric.

Koszul algebras were originally defined by Priddy in 1970 \cite{priddy} and have since revealed important applications in algebraic geometry, Lie theory, quantum groups, algebraic topology and, recently, combinatorics \cite{hohai}. The rich structure and long history of Koszul algebras are clearly detailed in \cite{poli}. There exist numerous equivalent definitions of a Koszul algebra (see for instance \cite{backelin}).
Such an algebra may be understood as a positively graded algebra that is "as close to semisimple as it can possible be" (see \cite{beilinson} for more details). Many nice
homological properties of Koszul algebras have been shown in research areas of
commutative and noncommutative algebras, such as algebraic topology, algebraic
geometry, quantum group and Lie algebra (\cite{aquino}, \cite{beilinson}, etc.).

The Leibniz algebras form a Koszul operad in the sense of V. Ginzburg and M. Kapranov \cite{Gin}. Under the Koszul duality the operad of Lie algebras is dual to the operad of associative and commutative algebras. The notion of dual Leibniz algebra defined by J.-L. Loday \cite{Loday} is precisely the dual operad of Leibniz algebras in this sense.

In this paper we will study algebras which are the dual to Leibniz algebras in Koszul sense. J.-L. Loday studied in \cite{Loday2} categorical properties of Leibniz algebras and considered in this connection a new object -- Zinbiel algebras (Leibniz is written in reverse order). Since the category of Zinbiel algebras is Koszul dual to the  category of Leibniz algebras, sometimes they are also called dual Leibniz algebras.
Any dual Leibniz algebra with respect to the symmetrization $a\star b=ab+ba$ is an associative and commutative algebra. For more details we refer to reader to \cite{Loday, libroLoday}. This define a functor $(dual\ Leibniz)\longrightarrow (Com)$ between the categories of algebras, which is dual to the inclusion functor $(Lie)\longrightarrow (Leibniz)$.

The notion of associative dialgebra defines an algebraic operad $Dias,$ which is binary and quadratic. By the theory of Ginzburg and Kapranov \cite{Gin}, there is a well-defined ``dual operad" $Dias^{!}.$ In \cite{libroLoday} Loday and others show that this is precisely the operad $Dend$ of the ``dendriform algebras". Moreover in this paper \cite{Gin} shows that $As^{!}=As,$ $Com^{!}=Lie$ and $Lie^{!}=Com.$

These interrelations are showed in the following categorical diagram, where the main property is that the categories which lie symmetrically with respect to vertical linea crossing the category of associative algebras are Koszul dual categories.\\
$$\begin{array}{lllllllll}
& & Dend & & & &Dias& &  \\
& \displaystyle\nearrow\hspace{-0.2cm}\nearrow & & \displaystyle\searrow & &\displaystyle\nearrow\hspace{-0.2cm}\nearrow& &\displaystyle\searrow &\\
Zinb& &  & &As  & & &\qquad Leib\\
& \displaystyle\searrow& & \displaystyle\nearrow\hspace{-0.2cm}\nearrow & &\displaystyle\searrow& &\displaystyle\nearrow\hspace{-0.2cm}\nearrow &\\
& & Com & & & &Lie& &
\end{array}$$

The symbol $A \rightrightarrows B$ means that the algebra of
category A is in the category B and the symbol $A\longrightarrow
B$ indicates that the algebra of category A with a special
operation gives an algebra of the category B.

In the works \cite{Jobir3, Dz, Umir} some interesting properties of Zinbiel algebras were obtained. In particular, the nilpotency of an arbitrary complex finite-dimensional Zinbiel algebra was proved. Thus, the classification of finite-dimensional complex Zinbiel algebras is reduced to nilpotent ones. However, the study of the non-associative nilpotent algebras, in particular of the Zinbiel algebras, is too complex. In fact, the same study in Lie case was appeared two centuries ago and it is still unsolved. Therefore  we reduce our attention to an important family: the naturally graded $p$-filiform Zinbiel algebras, since it gives a really crucial information about p-filiform algebras without gradation restriction.

Let us introduce some definitions and notations, all of them necessary for the understanding of this work.

\begin{defn}
A Leibniz algebra over $K$ is a vector space $\mathcal{L}$ equipped with a bilinear map, called bracket,
$$[-,-]:  \mathcal{L} \times  \mathcal{L} \rightarrow  \mathcal{L} $$
satisfying the Leibniz identity:
$$[x,[y,z]]=[[x,y],z]-[[x,z],y],$$
for all $x,y,z \in  \mathcal{L}.$
\end{defn}
\begin{exam}
Any Lie algebra is a Leibniz algebra.
\end{exam}

\begin{defn} A vector space $\ll$ with a bilinear operation ``$\circ$'' is called Zinbiel algebra if for any $x,y,z \in \ll$ the following identity
$$ (x\circ y)\circ z=x\circ (y\circ z)+x\circ (z \circ y)$$
holds.
\end{defn}

Examples of Zinbiel algebras can be found in \cite{Jobir3,Dz,Loday}.

For a given Zinbiel algebra $\ll$ we define the following sequence
$$\ll^1=\ll, \ \ll^{k+1}=\ll \circ \ll^k, \ k\geq 1. $$

\begin{defn}
A Zinbiel algebra $\ll$ is called nilpotent if there exists $s \in \mathbb{N}$ such that $\ll^s \neq 0$ and $\ll^{s+1}=0.$ The minimal number $s$ satisfying this property is called the index of nilpotency or nilindex of the algebra $\ll.$
\end{defn}

It is not difficult to see that the nilindex of an arbitrary $n$-dimensional nilpotent algebra does not exceed the number $n$.

The set $R(\ll) = \{x \in \ll \ | \  x \circ y = 0 \hbox{ for any } y \in \ll\}$ is called the right annihilator of
the Zinbiel algebra $\ll,$ the set $L(\ll) = \{x \in \ll \ | \ y \circ x = 0 \hbox{ for any } y \in \ll\}$ is the left annihilator and $Cent(\ll) = \{x,y \in \ll \ | \ x \circ y= y \circ x = 0 \hbox{ for any } y \in \ll\}$ is the center of $\ll.$

Let us denote by $L_{x}$ the left operator $L_x: \ll \longrightarrow \ll$ such that $L_x(y)=[x,y]$ for any $y \in \ll.$

Let $x$ be a nilpotent element of the set $\ll \setminus \ll^2$. For the nilpotent operator $L_x$ we define a descending sequence $C(x)=(n_1,n_2, \dots, n_k)$, which consists of the dimensions of the Jordan blocks of the operator $L_x$. In the set of such sequences we consider the lexicographic order, that is,
  $C(x)=(n_1,n_2, \dots, n_k)< C(y)=(m_1, m_2, \dots, m_s)$ if and only if there exists $i \in \mathbb{N}$ such that $n_j=m_j$ for any $j<i$ and $n_i<m_i$.

  \begin{defn}\label{def:char.seq}
  The sequence $C(\ll)=\max \{C(x)\  : \ x \in \ll \setminus \ll^2 \}$ is called the characteristic sequence of the algebra $\ll$.
  \end{defn}

  Let $\ll$ be an $n$-dimensional nilpotent Zinbiel algebra and $p$ a non negative integer ($p<n$).
  \begin{defn}\label{def:p-fili}
  The Zinbiel algebra $\ll$ is called $p$-filiform if $C(\ll)=(n-p,\underbrace{1,\dots,1}_{p})$. If $p=0$, $\ll$ is called null-filiform
  and if $p=1$ it is called filiform.
  \end{defn}

  Let $\ll$ be a finite-dimensional nilpotent Zinbiel algebra. Put
$\ll_i=\ll^i/\ll^{i+1}$ with $1 \leq i \leq s-1,$ where $s$ is the nilindex of
the algebra $\ll$ and denote $gr(\ll) = \ll_1 \oplus \ll_2\oplus
\ldots\oplus \ll_{s-1}.$ The graded Zinbiel algebra,
$gr(\ll),$ is obtained where $[\ll_i,\ll_j] \subseteq \ll_{i+j}.$ An algebra $\ll$ is called naturally graded if $\ll\cong gr(\ll).$

Let $\ll$ be a naturally graded Zinbiel algebra with characteristic
sequence $(n-p,1, \dots,1).$ By de\-finition of characteristic sequence
there exists a basis $\{e_1, e_2, \dots, e_n\}$ in the algebra $\ll$
such that the operator $L_{e_1}$ has one block $J_{n-p}$ of
size $n-p$ and the others blocks $J_1$ of
size one.

The aim of this work is to continue the study of naturally graded Zinbiel algebras. Since the null-filiform, filiform and 2-filiform cases are solved in \cite{TesisJobir, Jobir3}, we give a step further and we obtain the classification in the general case of $p$-filiform algebras. Moreover, the thesis of Adashev \cite{TesisJobir} allows us to reduce our discussion to the case $n-p \geq 4.$

The paper is divided into two sections. In the first one we establish the natural gradation of the naturally graded $p$-filiform Zinbiel algebras. In addition, we obtain main information on the structural constants of the law of our algebras. In the second section we present the classification of these kind of algebras (Theorem \ref{classif}), where we have proved that there is only one family of $n$-dimensional naturally graded $p$-filiform Zinbiel algebras with $n-p \geq 4.$

\section{Natural gradation of $p$-filiform Zinbiel algebras}
The classification of the naturally graded $p$-filiform Zinbiel algebras presented in this work is esta\-blished using the characteristic sequence as the principal invariant.
Let $Z$ be a graded $p$-filiform $n$-dimensional Zinbiel algebra.
Then there exists a basis $\{e_1, e_2, \dots, e_{n-p}, f_1, \dots,
f_p\}$ such that $e_1\in Z\setminus Z^2$ and $C(e_1)=(n-p,
\underbrace{1,\dots,1}_{p-times}).$ By the definition of
characteristic sequence the operator $L_{e_1}$ in the Jordan form
has one block $J_{n-p}$ of size $n-p$ and $p$ blocks $J_1$ (where
$J_1=\{0\}$) of size one, that is, the left operator $L_{e_1}$ is isomorphic to one of the following matrix:

$$\left(\begin{array}{cccc}
J_{n-p}&0&\dots&0\\
0&J_1&\dots&0\\
\vdots&\vdots&\ddots&\vdots\\
0&0&\dots&J_1\\
\end{array}\right),
\left(\begin{array}{cccc}
J_1&0&\dots&0\\
0&J_{n-p}&\dots&0\\
\vdots&\vdots&\ddots&\vdots\\
0&0&\dots&J_1\\
\end{array}\right),\dots,
\left(\begin{array}{cccc}
J_{1}&\dots&0&0\\
\vdots&\ddots&\vdots&\vdots\\
0&\dots&J_1&0\\
0&\dots &0& J_{n-p}\\
\end{array}\right).$$

Making basis shift one can assume that there are only two following types:
$$\left(\begin{array}{cccc}
J_{n-p}&0&\dots&0\\
0&J_1&\dots&0\\
\vdots&\vdots&\ddots&\vdots\\
0&0&\dots&J_1\\
\end{array}\right),
\left(\begin{array}{cccc}
J_1&0&\dots&0\\
0&J_{n-p}&\dots&0\\
\vdots&\vdots&\ddots&\vdots\\
0&0&\dots&J_1\\
\end{array}\right).$$

A $p$-filiform Zinbiel algebra is called of first type if the operator $L_{e_1}$ is isomorphic to the first matrix. Otherwise it is called of second type.

It is worthwhile to consider the following result since it allow us to reduce our study to the algebras of first type.

\begin{prop} \label{prode} Let $\ll$ be a $p$-filiform $n$-dimensional Zinbiel algebra of second type. Then $n-p \leq 3.$
\end{prop}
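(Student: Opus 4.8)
The plan is to reduce the ``second type'' hypothesis to a single fact about the product of $e_1$ with itself, and then to show by a short operator computation that $L_{e_1}^{3}=0$, which bounds the size of the largest Jordan block. First I would identify what second type means concretely. Because the image $e_1\circ\ll$ of $L_{e_1}$ lies in $\ll^2$, while $e_1\in\ll\setminus\ll^2$, the generator $e_1$ can never belong to $e_1\circ\ll$; hence in a Jordan basis for $L_{e_1}$ the element $e_1$ either generates a chain (when $e_1\circ e_1\neq 0$) or sits alone in a $J_1$ block (when $e_1\circ e_1=0$). Since in a $p$-filiform algebra every nontrivial block has the maximal size $n-p$, the first alternative is exactly the first type and the second alternative is exactly the second type. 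Thus the working hypothesis becomes simply
\[
e_1\circ e_1=0 .
\]

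Next I would extract two relations between the left and right multiplication operators $L_{e_1}(w)=e_1\circ w$ and $R_{e_1}(w)=w\circ e_1$ by specializing the Zinbiel identity $(x\circ y)\circ z=x\circ(y\circ z)+x\circ(z\circ y)$. Taking $y=z=e_1$ gives $(w\circ e_1)\circ e_1=2\,w\circ(e_1\circ e_1)=0$, that is $R_{e_1}^{2}=0$. Taking $x=y=e_1$ gives $0=(e_1\circ e_1)\circ w=e_1\circ(e_1\circ w)+e_1\circ(w\circ e_1)$, that is $L_{e_1}^{2}=-L_{e_1}R_{e_1}$. These two identities are the whole engine of the argument.

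The conclusion is then immediate: applying $L_{e_1}^{2}=-L_{e_1}R_{e_1}$ twice and then $R_{e_1}^{2}=0$,
\[
L_{e_1}^{3}=L_{e_1}L_{e_1}^{2}=-L_{e_1}^{2}R_{e_1}=L_{e_1}R_{e_1}^{2}=0 .
\]
Hence every Jordan block of $L_{e_1}$ has size at most $3$; as the characteristic sequence is $(n-p,1,\dots,1)$ its largest block has size $n-p$, so $n-p\le 3$.

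I expect no difficulty in the computation, which is purely formal. The only step requiring genuine care is the reduction in the first paragraph: one must argue rigorously that for a $p$-filiform operator there is no intermediate Jordan block, so that the alternative $e_1\circ e_1=0$ versus $e_1\circ e_1\neq 0$ coincides exactly with the second/first type distinction. The image--kernel observation above is the cleanest way to settle this, and it is worth noting that the whole argument uses only the Zinbiel identity and never invokes the natural gradation.
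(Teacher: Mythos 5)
Your proof is correct, but it takes a genuinely different route from the paper's. The paper argues inside the adapted basis of a second-type algebra: assuming $n-p\geq 4$, it expands $e_2\circ e_1=\alpha e_3+\sum_{i=4}^{n-p}\gamma_ie_i+\sum_{i=1}^{p}\beta_if_i$ with unknown coefficients, uses $0=(e_1\circ e_1)\circ e_2$ to force $\alpha=-1$ and $\gamma_i=0$, deduces $e_3\circ e_1=(e_1\circ e_1)\circ e_2=0$, and reaches the contradiction $0=(e_1\circ e_1)\circ e_3=e_1\circ(e_1\circ e_3)+e_1\circ(e_3\circ e_1)=e_5\neq 0$. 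You package the same two specializations of the Zinbiel identity (namely $x=y=e_1$ and $y=z=e_1$) into the operator identities $L_{e_1}^2=-L_{e_1}R_{e_1}$ and $R_{e_1}^2=0$, and conclude $L_{e_1}^3=L_{e_1}R_{e_1}^2=0$, which bounds every Jordan block of $L_{e_1}$ by $3$. This eliminates all coefficient bookkeeping, is basis-free, and in fact proves the stronger general statement that in any Zinbiel algebra an element $x$ with $x\circ x=0$ satisfies $L_x^3=0$, so no such element can have a Jordan block of size $\geq 4$; the paper's computation, by contrast, is elementary and self-contained but ad hoc, and yields nothing beyond the contradiction. One caution about your opening reduction: for this proposition you only need the implication that second type gives $e_1\circ e_1=0$, which is immediate from the definition (in the second-type Jordan form $e_1$ spans a $J_1$ block). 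Your claimed converse, that $e_1\circ e_1\neq 0$ forces $e_1$ to head a block of maximal size $n-p$, does not follow from the image--kernel observation alone: a vector outside the image with nonzero square can generate a chain of intermediate length (for example, the sum of a non-head vector of the big block and a $J_1$ vector). Since your argument never uses that direction, the proof is unaffected, but the equivalence as stated should not be presented as settled.
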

\begin{proof}
The proof is straightforward by definition of second type algebras. Let us see it. By definition of second type algebras there exists a basis $\{e_1, e_{2},\dots,e_{n-p},f_1, \dots,f_p \}$ of $\ll$ such that
$$\begin{array}{ll}
e_1\circ e_1=0,&\\
e_1\circ e_i=e_{i+1},& 2\leq i \leq n-p-1,\\
e_1 \circ e_{n-p}=f_1,&\\
e_1\circ f_j=0, & 1\leq j \leq p,
\end{array}$$
and the other products equal zero. Therefore, the natural gradation of $\ll$ is $$\langle e_1,e_2,f_{2}, \dots,f_{p}\rangle \subseteq Z_1,\ \langle e_3 \rangle \subseteq Z_2,\dots,\langle e_{n-p} \rangle \subseteq Z_{n-p-1},\ \langle f_{1}\rangle \subseteq Z_{n-p}.$$

Assume that $n-p\geq 4$ and let us write $e_2 \circ e_1 =\alpha e_3 +\displaystyle\sum_{i=4}^{n-p} \gamma_i e_i+\displaystyle\sum_{i=1}^p \beta_i f_i.$

Then,
$$\begin{array}{ll}
0=(e_1 \circ e_1) \circ e_2&=e_1\circ (e_1 \circ e_2)+e_1 \circ (e_2 \circ e_1)=e_1\circ e_3 +\alpha e_1 \circ e_3+
\displaystyle\sum_{i=4}^{n-p} \gamma_i e_1 \circ e_i=\\[2mm]
&=(1+\alpha)e_4+\displaystyle\sum_{i=4}^{n-p} \gamma_i e_{i+1},
\end{array}$$ which
implies $\alpha=-1,$ $ \gamma_k=0,$  with $ 4\leq k \leq n-p.$ Hence we have $$e_3 \circ e_1=(e_1 \circ e_2)\circ e_1=e_1 \circ (e_2 \circ e_1)+e_1\circ (e_1 \circ e_2)=(e_1\circ e_1) \circ e_2=0.$$
Therefore, $0=(e_1 \circ e_1)\circ e_3 =e_1\circ (e_1 \circ e_3)+e_1\circ (e_3 \circ e_1)=e_5,$ which is a contradiction.

\end{proof}

The aim of this section is obtain the expression of the natural gradation of the naturally graded $p$-filiform Zinbiel algebras of first type. These information will be useful to come to the classification of our algebras.

From now on we denote by $C_i^j$ the combinatorial numbers
$C_i^j=\left( \begin{array}{c}
   i\\
   j
   \end{array}\right)$

The following result holds for each naturally graded $p$-filiform Zinbiel algebras of first type.

\begin{lem}\label{lemma1} Let $\ll$ be a Zinbiel algebra such that $e_1 \circ e_i=e_{i+1}\hbox{ for } \ 1\leq i \leq k-1,$ with respect to the adapted basis $\{ e_1,\dots,e_k,e_{k+1},\dots,e_n\}.$ Then $$ e_i \circ e_j=C^{j}_{i+j-1}e_{i+j} \quad  \hbox{ for } \ 2\leq i+j \leq k. \eqno(1)$$
\end{lem}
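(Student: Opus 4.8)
The plan is to prove formula $(1)$ by induction on the total degree $s = i+j$, exploiting the fact that every basis vector $e_i$ with $i \ge 2$ can be rewritten as $e_i = e_1 \circ e_{i-1}$ (legitimate since $1 \le i-1 \le k-1$ throughout the relevant range) and then unwinding $e_i \circ e_j$ through the Zinbiel identity into products whose first factor carries a strictly smaller index.

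First I would dispose of the boundary row $i = 1$: there the asserted identity reads $e_1 \circ e_j = C^{j}_{j}\, e_{j+1} = e_{j+1}$, which is precisely the standing hypothesis $e_1 \circ e_j = e_{j+1}$ for $1 \le j \le k-1$; this simultaneously settles the base case $s = 2$, where $i = j = 1$ gives $e_1 \circ e_1 = e_2$. For the inductive step I assume $i \ge 2$ and that the formula holds for every pair of indices whose sum equals $s-1$. Writing $e_i \circ e_j = (e_1 \circ e_{i-1}) \circ e_j$ and applying the Zinbiel identity with $x = e_1$, $y = e_{i-1}$, $z = e_j$ gives
$$e_i \circ e_j = e_1 \circ (e_{i-1} \circ e_j) + e_1 \circ (e_j \circ e_{i-1}).$$
Both $e_{i-1} \circ e_j$ and $e_j \circ e_{i-1}$ have index sum $(i-1)+j = s-1$, so the induction hypothesis applies and each is a scalar multiple of $e_{i+j-1}$.

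Substituting these values and using $e_1 \circ e_{i+j-1} = e_{i+j}$ (valid exactly when $2 \le i+j \le k$, which matches the range of the lemma) I arrive at
$$e_i \circ e_j = \left(C^{j}_{i+j-2} + C^{i-1}_{i+j-2}\right) e_{i+j}.$$
It then remains to check the combinatorial identity $C^{j}_{i+j-2} + C^{i-1}_{i+j-2} = C^{j}_{i+j-1}$. For this I would first note the symmetry $C^{i-1}_{i+j-2} = C^{j-1}_{i+j-2}$ (since $(i+j-2)-(i-1) = j-1$), which reduces the claim to Pascal's rule $C^{j}_{i+j-2} + C^{j-1}_{i+j-2} = C^{j}_{i+j-1}$, closing the induction.

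The computations are essentially mechanical, so I expect the only delicate points to be bookkeeping of the index constraints — verifying that each use of $e_1 \circ e_m = e_{m+1}$ and of the induction hypothesis stays within the admissible range — and recognizing the binomial symmetry that recasts the two terms as a Pascal sum. The conceptual crux is this last observation: it is exactly the symmetry $C^{i-1}_{i+j-2} = C^{j-1}_{i+j-2}$ that lets the two contributions arising from the two summands of the Zinbiel identity merge into precisely the coefficient $C^{j}_{i+j-1}$ predicted by the statement.
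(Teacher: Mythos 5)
Your proof is correct, but it takes a genuinely different route from the paper's. The paper inducts on the \emph{second} index $j$: it first establishes the auxiliary identity $e_i\circ e_1=i\,e_{i+1}$ by a separate induction, then expands the second factor, $e_i\circ e_{j+1}=e_i\circ(e_1\circ e_j)$, and applies the Zinbiel identity in the rearranged form $x\circ(y\circ z)=(x\circ y)\circ z-x\circ(z\circ y)$. This puts the unknown product on both sides of the equation, so the paper has to solve the linear relation $(1+j)\,e_i\circ e_{j+1}=i\,C^{j}_{i+j}e_{i+j+1}$ and verify $\frac{i}{j+1}\,C^{j}_{i+j}=C^{j+1}_{i+j}$ by a direct factorial computation. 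You instead induct on the total degree $s=i+j$, expand the \emph{first} factor, $e_i\circ e_j=(e_1\circ e_{i-1})\circ e_j$, and use the Zinbiel identity in its defining orientation; both resulting products $e_{i-1}\circ e_j$ and $e_j\circ e_{i-1}$ have degree $s-1$ and fall immediately to the induction hypothesis, after which the coefficient closes via the symmetry $C^{i-1}_{i+j-2}=C^{j-1}_{i+j-2}$ and Pascal's rule. Your version buys a self-contained argument with no auxiliary lemma and no term to move across the equation; the paper's version isolates the identity $e_i\circ e_1=i\,e_{i+1}$ (which your induction only contains implicitly, as the pairs with second index $1$), and that identity is reused repeatedly in the later structure results of the paper. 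Your index bookkeeping is also sound: the expansion $e_i=e_1\circ e_{i-1}$ needs $i-1\leq k-2$, and the final step $e_1\circ e_{i+j-1}=e_{i+j}$ needs $i+j\leq k$, both of which hold in the stated range.
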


\begin{proof} In order to get $(1)$, it is convenient to prove the equality $e_{i}\circ e_1=ie_{i+1}$ for $1\leq i \leq k-1,$ which is clear by induction.

Assume that $(1)$ holds for $j$ such that  $j\leq k-1$ and $1\leq i\leq k-j$ and let us prove it for $j+1.$ Since
$e_i \circ e_{j+1}=e_i\circ (e_1\circ e_j)=(e_i\circ e_1)\circ e_j-e_i\circ (e_j\circ e_1)= ie_{i+1}\circ e_j-je_i\circ e_{j+1}=iC^j_{i+j}e_{i+j+1}-je_i\circ e_{j+1},$ where $C^j_i$ is the corresponding combinatory number, we conclude that
$$e_i\circ e_{j+1}=\frac{i}{j+1}\cdot C^j_{i+j}e_{i+j+1}=\frac{(i+j)!}{(j+1)!(i-1)!}e_{i+j+1}=C^{j+1}_{i+j}e_{i+j+1}.$$
Therefore, $(1)$ holds for all $2\leq i+j\leq k.$
\end{proof}

Let $\ll$ be a naturally graded $p$-filiform Zinbiel algebra of first type of dimension equals $n$ and let $\{e_1, \dots,e_{n-p},f_1, \dots.f_p\}$ be an adapted bases such that $e_1$ is a characteristic vector of $\ll$. Due to the above lemma and the definition of an algebra of first type we know the following products:
$$\begin{cases}
e_1 \circ e_i=e_{i+1}, \quad 1 \leq i \leq n-p-1,\\
e_1 \circ f_i=0, \quad 1 \leq i \leq p,\\
e_i \circ e_j=C_{i+j-1}^{j}e_{i+j}, \quad 2 \leq i+j \leq n-p,
\end{cases}$$
and the following information on the gradation: $\langle e_1\rangle\subseteq \ll_1, \langle e_2\rangle\subseteq \ll_2, \dots, \langle e_{n-p}\rangle\subseteq \ll_{n-p}.$

  Denote by $r_i$ the number such that $f_i\in \ll_{r_i},$ for $1\leq i\leq p.$  Then one can assume that $1\leq r_1\leq r_2\leq\dots\leq r_p\leq n-p.$ Otherwise we can make a change of basis and to interchange the roles of $f_i.$

\begin{thm} Let $\ll$ be a $n$-dimensional naturally graded  $p$-filiform Zinbiel algebra of first type. Then $r_s\leq s$ for all $1\leq s \leq p.$
\end{thm}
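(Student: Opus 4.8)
The plan is to turn the statement into a dimension inequality on the homogeneous components and then to read off the conclusion from the extremality of the characteristic sequence.

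First I would record the shape of the gradation. Writing $c_k=\#\{i:r_i=k\}$ for $1\le k\le n-p$, the component $\ll_k$ is spanned by $e_k$ together with the $f_i$ of degree $k$, so that $\dim\ll_k=1+c_k$ and $\sum_{k=1}^{n-p}c_k=p$. Since $r_1\le r_2\le\cdots\le r_p$, the inequality $r_s\le s$ for a fixed $s$ is exactly the requirement that at least $s$ of the $r_i$ do not exceed $s$, i.e. that $\sum_{k=1}^{s}c_k\ge s$; and through $\dim(\ll_1\oplus\cdots\oplus\ll_s)=s+\sum_{k=1}^{s}c_k$ this is in turn equivalent to
$$\dim(\ll_1\oplus\cdots\oplus\ll_s)\ge 2s,\qquad\text{equivalently}\qquad \dim\ll^{s+1}\le n-2s.$$
So it suffices to establish this last bound for every $s$, which I would attempt by induction on $s$.

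Second, I would assemble the structural tools. Because $\ll$ is naturally graded, hence generated by $\ll_1$, one has $\ll_{k+1}=\ll_1\circ\ll_k$ (the degree $k+1$ part of $\ll\circ\ll^k$ forces the left factor into degree one). Write $\ll_1=\langle e_1\rangle\oplus F_1$ with $F_1=\langle f_i:r_i=1\rangle$. From the known products $e_1\circ e_k=e_{k+1}$ and $e_1\circ f_i=0$ we get $e_1\circ\ll_k=\langle e_{k+1}\rangle$, so every $f$-type vector of $\ll_{k+1}$ must already be produced by $F_1\circ\ll_k$. I would then use the Zinbiel identity to reduce these products: from $(f\circ e_1)\circ e_{k-1}=f\circ(e_1\circ e_{k-1})+f\circ(e_{k-1}\circ e_1)$ together with $e_{k-1}\circ e_1=(k-1)e_k$ one obtains $f\circ e_k=\tfrac1k\,(f\circ e_1)\circ e_{k-1}$, so the action of a degree-one generator on the entire $e$-chain is governed by the single seed $f\circ e_1\in\ll_2$; iterating, the whole $f$-content created in higher components descends from the finitely many seeds $f\circ e_1$ and $f\circ f'$ with $f,f'\in F_1$.

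Third is the heart of the matter: the characteristic sequence. I would argue by contradiction, assuming $\sum_{k\le s}c_k\le s-1$ for some $s$, so that more than $p-s$ generators $f_i$ are born in degrees exceeding $s$. The aim is to manufacture from the seeds above an element $x\in\ll\setminus\ll^2$ (a combination of $e_1$ with members of $F_1$) for which $L_x$ acquires either a second Jordan block of size $\ge 2$ or a block longer than $n-p$, contradicting $C(\ll)=(n-p,\underbrace{1,\dots,1}_{p})$. Morally, an $f$ that can only appear late forces a longer chain under some left multiplication, which the extremal Jordan type forbids; the $p$-filiform hypothesis therefore compels the extra generators to appear as early as possible, which is precisely the front-loading $\sum_{k\le s}c_k\ge s$.

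I expect the main obstacle to be exactly this third step, namely converting the extremal condition $C(\ll)=(n-p,1^p)$ into the numerical front-loading. I would stress that the tempting shortcut $\dim\ll_{k+1}\le\dim\ll_k$ is \emph{false}: the profile $r=(1,1,3)$ satisfies $r_s\le s$ while $\dim\ll_2<\dim\ll_3$, so one cannot argue component by component and must instead control the accumulated $f$-content of $\ll_1\oplus\cdots\oplus\ll_s$ as a whole, carrying forward through the induction the surplus created in low degrees. The Zinbiel reductions of the second step, feeding the characteristic-sequence bound of the third, are what should make this cumulative bookkeeping go through.
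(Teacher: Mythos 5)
Your Steps 1 and 2 are sound, and Step 2 in particular assembles exactly the tools the paper's proof runs on: the graded identity $\ll_{k+1}=\ll_1\circ\ll_k$ and the Zinbiel reduction $f\circ e_k=\tfrac{1}{k}(f\circ e_1)\circ e_{k-1}$. But the proof stops there. Step 3, which you yourself call the heart of the matter and the main obstacle, is never carried out: you state an aim (manufacture $x\in\ll\setminus\ll^2$ whose operator $L_x$ has a forbidden Jordan type) and a heuristic (an $f$ born late ``forces a longer chain''), but you give no construction of $x$, no computation of $L_x$, and no derivation of a contradiction. Since converting the assumption $r_s>s$ into a contradiction is the entire content of the theorem, what you have is a framework plus an acknowledged hole where the argument should be.

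The gap matters all the more because the mechanism you gesture at is not the one that works, and the actual one is simpler. In the paper, the characteristic sequence is invoked exactly once: to get $r_1=1$ (if $r_1>1$ the algebra is one-generated, hence null-filiform, contradicting $C(\ll)=(n-p,1,\dots,1)$ with $p\geq 1$). From then on there is no Jordan-form argument at all. Assuming inductively $r_k\le k$ for $k<s$ and supposing $r_s>s$, one first proves, by your seed reduction together with a case split on whether $r_t+1=r_{t+1}$ or $r_t+1<r_{t+1}$, the inclusion $f_t\circ e_{r_s-r_t}\in\langle e_{r_s}\rangle$ for $1\le t\le s-1$. Since moreover no $f_j$ can lie in degree $r_s-1$ (for $j<s$ because $r_j\le j\le s-1<r_s-1$, and for $j\ge s$ because $r_j\ge r_s$), we have $\ll_{r_s-1}=\langle e_{r_s-1}\rangle$, and therefore $\ll_{r_s}=\ll_1\circ\ll_{r_s-1}\subseteq\langle e_{r_s}\rangle$: a one-dimensional space which nevertheless must contain $f_s$ --- the contradiction. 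So the theorem is closed by a dimension count on the single component $\ll_{r_s}$, not by cumulative bookkeeping over $\ll_1\oplus\dots\oplus\ll_s$ and not by exhibiting an element with larger characteristic sequence; your reformulation via $\dim\ll^{s+1}\le n-2s$, while correct, is a detour. A side remark: your claim that the inequality $\dim\ll_{k+1}\le\dim\ll_k$ ``is false'' is itself mistaken for these algebras --- it is proved later in the paper (Proposition \ref{s<s}); what your profile $(1,1,3)$ shows is only that monotonicity does not follow from the conclusion $r_s\le s$, i.e., that it is a strictly stronger statement, not that it fails.
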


\begin{proof}
First of all, note that $r_1=1.$ Indeed if $r_1>1$ then $\ll$ would be one-generated, that is $\ll$ would be null-filiform Zinbiel algebra. That implies that its characteristic sequence is $C(\ll)=(n),$ which contradicts our hypothesis.

We now proceed by induction on $s$. Let us see the restriction on $r_2$. Let us suppose $r_2>2,$ then $\ll_1=\langle e_1,f_1\rangle,\, \ll_2=\langle e_2 \rangle,\dots, \ll_{r_2}=\langle e_{r_2},f_2\rangle$ which implies $\ll_{r_2}=\ll_1\circ \ll_{r_2-1}=\langle e_1, f_1\rangle \circ \langle e_{r_2-1}\rangle =\langle e_{r_2}, f_1\circ e_{r_2-1}\rangle.$

Consider $f_1 \circ e_{r_2-1}=f_1\circ (e_1\circ e_{r_2-2})=(f_1\circ e_1)\circ e_{r_2-2}-f_1\circ (e_{r_2-2}\circ e_1).$ Since $f_1\circ e_1 \in \ll_2$ we put $f_1 \circ e_1=\alpha e_2$ for some $\alpha \in \mathbb{C}.$

Then
$$\begin{array}{ll}
f_1 \circ e_{r_2-1}&=\alpha e_2 \circ e_{r_2-2}-(r_2-2)f_1\circ e_{r_2-1}=\alpha C_{r_2-1}^{r_2-2}e_{r_2}-(r_2-2)f_1\circ e_{r_2-1}=\\
&=\alpha(r_2-1)e_{r_2}-(r_2-2)f_1\circ e_{r_2-1}
\end{array}$$
which implies $(r_2-1)f_1\circ e_{r_2-1}=\alpha(r_2-1)e_{r_2},$ such that $\ll_{r_2}=\langle e_{r_2} \rangle$ which contradicts $f_2 \in \ll_{r_2}$. Thus, $r_2\leq 2.$

Assume that $r_k\leq k$ for all $1\leq k \leq s-1$ for some $s\in\mathbb{N};$ we will prove that $r_s \leq s.$ Suppose, contrary to our claim, that $r_s>s.$ To come to a contradiction, we first show that
$$f_t \circ e_{r_s-r_t}\subseteq \langle e_{r_s}\rangle, \ 1\leq t \leq s-1. \eqno (2)$$

If $t=s-1$ then $f_{s-1}\circ e_{r_s-r_{s-1}}=f_{s-1}\circ (e_1\circ e_{r_s-r_{s-1}-1})=(f_{s-1}\circ e_1)\circ e_{r_s-r_{s-1}-1}-f_{s-1}\circ (e_{r_s-r_{s-1}-1}\circ e_1).$ Moreover since $f_{s-1}\circ e_1\in \ll_{r_{s-1}}\circ \ll_1\subseteq \ll_{r_{s-1}+1}=\langle e_{r_{s-1}+1}\rangle$ we can write $f_{s-1}\circ e_1=\beta e_{r_{s-1}+1}$ for some $\beta \in \mathbb{C}.$ Therefore,
$$f_{s-1}\circ e_{r_s-r_{s-1}}=\beta C_{r_s-1}^{r_s-r_{s-1}-1}e_{r_s}-(r_s-r_{s-1}-1)f_{s-1}\circ e_{r_s-r_{s-1}},$$ that is
$$(r_s-r_{s-1})f_{s-1}\circ e_{r_s-r_{s-1}}=\beta e_{r_{s-1}+1},$$ which satisfies the inclusion $(2)$ when $t=s-1.$ We now see that $f_t \circ e_{r_s-r_t} \subseteq \langle e_{r_s} \rangle  \hbox{ with }1 \leq t \leq s-2.$ Since $f_t\circ e_{r_s-r_t}=f_{t}\circ (e_1\circ e_{r_s-r_{t}-1})=(f_{t}\circ e_1)\circ e_{r_s-r_{t}-1}-f_{t}\circ (e_{r_s-r_{t}-1}\circ e_1)$ and $f_{t}\circ e_1\in \ll_{r_{t}}\circ \ll_1\subseteq \ll_{r_{t}+1},$ it is necessary distinguish the following cases:

\

\begin{itemize}
\item {Case 1: If $r_t+1=r_{t+1}$} we have  $\ll_{r_{t}+1}=\langle e_{r_{t+1}},f_{t+1}\rangle.$ From this we conclude that $f_{t+1} \circ  e_{r_s-r_t-1} \in \langle e_{r_s} \rangle$ and $e_{r_t+1} \circ  e_{r_s-r_t-1} \in \langle e_{r_s} \rangle$, hence that $(r_s-r_t)f_t \circ e_{r_s-r_t} \in \langle e_{r_s} \rangle.$ And, in consequence, $f_{t}\circ (e_1\circ e_{r_s-r_{t}-1})=f_{t+1} \circ e_{r_s-r_{t+1}} \in \langle e_{r_s} \rangle.$ Therefore we assert that $f_t \circ e_{r_s-r_t} \in \langle e_{r_s} \rangle.$

    \

\item{Case 2: If $r_t+1<r_{t+1}$} then $f_{t}\circ e_1 \in \langle e_{r_t+1} \rangle=\ll_{r_t+1}$  and by using similar arguments than above case we obtain $f_t \circ e_{r_s-r_t} \in \langle e_{r_s} \rangle$.
\end{itemize}

\

Therefore $(2)$ is proved.

Now $\ll_1\circ \ll_{r_s-1}=\ll_{r_s}=\langle e_1,f_1,\dots,f_q\rangle \circ \langle e_{r_{s-1}}\rangle$ and since $r_1=\dots=r_q=1$ by $(2)$ we obtain $f_i \circ e_{r_s-1}\in \langle e_{r_s}\rangle$ for $1\leq i \leq q.$ This implies that $\ll_{r_s}=\langle e_{r_s}\rangle,$ a contradiction. Consequently, $r_s\leq s.$

\end{proof}
An important property of our gradation is showed in the following lemma: it is not possible that all $f_i$ with $1 \leq i \leq p$ are generators, otherwise the algebra would be split.

\begin{lem} Let $n-p \geq 3$ and $r_i=1$ for $1\leq i \leq p.$ Then $\ll$ is split.
\end{lem}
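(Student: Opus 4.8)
The plan is to show that, under the hypothesis, every structure constant attaching the $f_i$ to the rest of the algebra vanishes, so that $W=\langle f_1,\dots,f_p\rangle$ becomes a central ideal that splits off from the ideal generated by $e_1$. First I would record the products the gradation permits. Since $r_i=1$ for all $i$, each $f_i$ lies in $\ll_1$, and $e_1\circ f_i=0$ is already known. Because $\ll_2=\langle e_2\rangle$ and $\ll_3=\langle e_3\rangle$, the gradation forces
$$f_i\circ e_1=\alpha_i e_2, \qquad f_i\circ f_j=\beta_{ij}e_2, \qquad f_i\circ e_2\in\langle e_3\rangle$$
for scalars $\alpha_i,\beta_{ij}\in\mathbb{C}$. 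I would also note at the outset that, whatever the $\alpha_i$ are, the subspace $V=\langle e_1,\dots,e_{n-p}\rangle$ is automatically a two-sided ideal, since every product of a basis vector with an $e_k$ (on either side) again lands in $V$; so the only obstruction to a splitting is the interaction of the $f_i$ with the algebra, encoded in the $\alpha_i$ and $\beta_{ij}$.

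The heart of the argument is to prove $\alpha_i=0$ and $\beta_{ij}=0$ for all $i,j$. First, applying the Zinbiel identity to $f_i\circ(e_1\circ e_1)$ gives $2\,f_i\circ e_2=(f_i\circ e_1)\circ e_1=\alpha_i(e_2\circ e_1)=2\alpha_i e_3$, hence $f_i\circ e_2=\alpha_i e_3$ (using $e_2\circ e_1=2e_3$ from Lemma~\ref{lemma1}). Next I would feed two carefully chosen instances of the identity. Expanding $(e_1\circ f_i)\circ f_j$ and using $e_1\circ f_i=0$ yields
$$0=e_1\circ(f_i\circ f_j)+e_1\circ(f_j\circ f_i)=(\beta_{ij}+\beta_{ji})e_3,$$
so $\beta_{ij}=-\beta_{ji}$. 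Expanding $(f_i\circ f_j)\circ e_1$ instead gives
$$2\beta_{ij}e_3=(f_i\circ f_j)\circ e_1=f_i\circ(f_j\circ e_1)+f_i\circ(e_1\circ f_j)=\alpha_j\,(f_i\circ e_2)=\alpha_i\alpha_j e_3,$$
so $2\beta_{ij}=\alpha_i\alpha_j$. Both conclusions read off a coefficient in $\langle e_3\rangle$, which is precisely where the hypothesis $n-p\ge 3$ enters, guaranteeing $e_3\neq 0$. Taking $i=j$ in the antisymmetry relation gives $\beta_{ii}=0$, whence $\alpha_i^2=2\beta_{ii}=0$ and $\alpha_i=0$; then $2\beta_{ij}=\alpha_i\alpha_j=0$ forces $\beta_{ij}=0$ for all $i,j$.

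With $\alpha_i=\beta_{ij}=0$ the relations above give $f_i\circ e_1=0$, $f_i\circ e_2=0$ and $f_i\circ f_j=0$; an induction of the same shape as in Lemma~\ref{lemma1} (using $j\,(f_i\circ e_j)=(f_i\circ e_1)\circ e_{j-1}$, together with the analogous recursion $e_j\circ f_i=e_1\circ(e_{j-1}\circ f_i)+e_1\circ(f_i\circ e_{j-1})$) then propagates this to $f_i\circ e_j=0$ and $e_j\circ f_i=0$ for every $j$. Hence each $f_i$ annihilates $\ll$ on both sides, so $W=\langle f_1,\dots,f_p\rangle\subseteq Cent(\ll)$ is a two-sided ideal, and $\ll=V\oplus W$ is a direct sum of the two nonzero ideals $V$ and $W$ (nonzero since $p\ge 1$ and $n-p\ge 3$). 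Therefore $\ll$ is split.

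I expect the only delicate point to be the bookkeeping in the second paragraph: one must select exactly the two associators $(e_1\circ f_i)\circ f_j$ and $(f_i\circ f_j)\circ e_1$ so that the antisymmetry relation can be combined with the quadratic relation $2\beta_{ij}=\alpha_i\alpha_j$ to force everything to vanish, since neither identity suffices alone. It is worth double-checking that every coefficient is genuinely detected in $\langle e_3\rangle$, so that the argument legitimately collapses when $n-p<3$ (for $n-p=2$ there is no $e_3$ and the two key relations become vacuous), consistent with the stated hypothesis.
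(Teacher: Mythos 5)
Your proof is correct, and its skeleton is the same as the paper's: use the natural gradation (all $r_i=1$, so $\ll_j=\langle e_j\rangle$ for $j\geq 2$) to write $f_i\circ e_1=\alpha_i e_2$ and $f_i\circ f_j=\beta_{ij}e_2$, kill these constants by reading instances of the Zinbiel identity off in $\langle e_3\rangle$ (which is exactly where $n-p\geq 3$ enters), propagate by the recursion $j\,(f_i\circ e_j)=(f_i\circ e_1)\circ e_{j-1}$ and an induction for $e_j\circ f_i$, and conclude that $\ll$ splits as $NF_{n-p}\oplus\mathbb{C}^p$. The only genuine divergence is the route to $\alpha_i=0$. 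The paper gets it in one line from the associator you never use: $0=(e_1\circ f_i)\circ e_1=e_1\circ(f_i\circ e_1)+e_1\circ(e_1\circ f_i)=\alpha_i e_3$; once $\alpha_i=0$ is known, expanding $(f_i\circ f_j)\circ e_1=f_i\circ(f_j\circ e_1)+f_i\circ(e_1\circ f_j)=0$ immediately yields $2\beta_{ij}e_3=0$. You instead combine two relations: the antisymmetry $\beta_{ij}=-\beta_{ji}$ coming from $(e_1\circ f_i)\circ f_j=0$, and the quadratic relation $2\beta_{ij}=\alpha_i\alpha_j$ coming from $(f_i\circ f_j)\circ e_1$ evaluated \emph{without} assuming $\alpha_j=0$ (via the auxiliary computation $f_i\circ e_2=\alpha_i e_3$); specializing to $i=j$ gives $\alpha_i^2=2\beta_{ii}=0$, hence $\alpha_i=0$ and then $\beta_{ij}=0$. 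This is valid over $\mathbb{C}$ (it uses characteristic $\neq 2$ and the absence of nonzero square-zero scalars, both automatic here), and it has the small merit of forcing all the unknown constants to vanish simultaneously from one pair of identities; the paper's choice of associator is simply shorter and avoids the quadratic detour. Everything else --- your preliminary observation that $V=\langle e_1,\dots,e_{n-p}\rangle\supseteq \ll^2$ is automatically an ideal, the two recursions, and the final splitting $\ll=V\oplus W$ with $W=\langle f_1,\dots,f_p\rangle\subseteq Cent(\ll)$ --- matches the paper's proof step for step.
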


\begin{proof} Under these hypothesis we have
$$\ll_1=\langle e_1,f_1,\dots,f_p\rangle, \ \ll_2=\langle e_2 \rangle, \ \dots ,\  \ll_{n-p}=\langle e_{n-p} \rangle,$$  where
$e_1\circ e_i=e_{i+1}, \hbox{ for } 1\leq i \leq n-p-1 \hbox{ and }\ e_1\circ f_j=0 \hbox{ for } 1 \leq j \leq p.$

Let us prove that $f_i \in Cent(\ll)$ for $1 \leq i \leq p.$ By properties of the gradation we can write $f_i\circ e_1=\alpha_i e_2$ and $f_i \circ f_j=\beta_{ij}e_2$ for $1\leq i,j\leq p.$ Since $n-p \geq 3$ we have $$0=(e_1\circ f_i)\circ e_1=e_1\circ(f_i\circ e_1)+ e_1 \circ (e_1\circ f_i)=e_1\circ(f_i\circ e_1)=\alpha_ie_3,$$
hence $\alpha_i=0$ for $1 \leq i \leq p.$

Now $f_i \circ e_{j+1}=f_i \circ (e_1 \circ e_j)=(f_i\circ e_1)\circ e_j-f_i\circ (e_j\circ e_1)=-jf_i\circ e_{j+1}$ which implies $f_i\circ e_{j+1}=0$ for $1\leq j \leq n-p-1.$

From $(f_i\circ f_j)\circ e_1=f_i\circ (f_j\circ e_1)+f_i\circ (e_1\circ f_j)=0$ it follows that $2\beta_{ij}e_3=0,$ that is  $\beta_{ij}=0 \hbox{ for all }  1\leq i,j \leq p.$

Finally it is clear that $e_j \circ f_i=0$ by induction on $j,$ because $e_1 \circ f_i=0 \hbox{ for all } i$ by nilpotence. That implies that $\ll=NF_{n-p}\oplus \mathbb{C}^p,$ where $\mathbb{C}^p=\mathbb{C}\oplus\dots\oplus \mathbb{C}=\langle f_1 \rangle \oplus \dots \oplus \langle f_p \rangle$ and $NF_{n-p}$ is a naturally graded $p$-filiform Zinbiel algebra of dimension $n-p.$
 \end{proof}

An easy computation,  make it obvious the following result.

\begin{lem}\label{lemma}Let $\ll$ be a $p$-filiform Zinbiel algebra of first type. Assume that $f_j\circ e_i=0$ for some $1\leq i \leq n-p-2 \hbox{ and } 1\leq j \leq p.$ Then $f_j\circ e_{i+1}=\dots=f_j\circ e_{n-p-1}=0.$
\end{lem}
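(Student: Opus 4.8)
The plan is to reduce the whole statement to a single application of the Zinbiel identity combined with the products already known for algebras of first type, and then to close by a short induction. The key observation is that the chain $f_j\circ e_{i+1},\dots,f_j\circ e_{n-p-1}$ is controlled step by step by the preceding product $f_j\circ e_m$, so it suffices to establish the one-step implication $f_j\circ e_m=0 \Rightarrow f_j\circ e_{m+1}=0$.

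To obtain this implication I would apply the Zinbiel identity to the triple $(f_j, e_m, e_1)$, which gives
\[
(f_j\circ e_m)\circ e_1 = f_j\circ(e_m\circ e_1) + f_j\circ(e_1\circ e_m).
\]
On the right-hand side I substitute the two products valid for a first-type algebra, namely $e_1\circ e_m=e_{m+1}$ and $e_m\circ e_1=m\,e_{m+1}$, the latter being the auxiliary identity established inside the proof of Lemma \ref{lemma1}. These substitutions are legitimate precisely when $m\le n-p-1$, which holds throughout the range we need since the largest index entering the computation is $m=n-p-2$. Collecting terms, the right-hand side becomes $(m+1)\,f_j\circ e_{m+1}$, so the identity reads $(f_j\circ e_m)\circ e_1=(m+1)\,f_j\circ e_{m+1}$.

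With this relation in hand the argument is immediate: if $f_j\circ e_m=0$ then the left-hand side vanishes, and since the scalar $m+1$ is nonzero in the ground field $\mathbb{C}$, we conclude $f_j\circ e_{m+1}=0$. Starting from the hypothesis $f_j\circ e_i=0$ and iterating this implication for $m=i,\,i+1,\,\dots,\,n-p-2$ yields $f_j\circ e_{m+1}=0$ at each stage, hence the full conclusion $f_j\circ e_{i+1}=\dots=f_j\circ e_{n-p-1}=0$.

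There is no genuine obstacle here; the only points requiring a moment of care are bookkeeping rather than substance. First, one must check that every index $m$ that appears keeps the substituted products $e_1\circ e_m=e_{m+1}$ and $e_m\circ e_1=m\,e_{m+1}$ within their stated range of validity, which is guaranteed by the restriction $i\le n-p-2$. Second, the cancellation relies on $m+1\neq 0$, which is automatic over $\mathbb{C}$; this is the sole place where the choice of ground field is used. I expect the verification of these range conditions to be the most delicate, though still entirely routine, part of writing the argument cleanly.
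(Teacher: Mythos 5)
Your proof is correct and is exactly the ``easy computation'' the paper leaves implicit: the relation $(f_j\circ e_m)\circ e_1=(m+1)\,f_j\circ e_{m+1}$, obtained from the Zinbiel identity together with $e_1\circ e_m=e_{m+1}$ and $e_m\circ e_1=m\,e_{m+1}$, is the same identity the paper itself invokes elsewhere (e.g.\ $i(f_j\circ e_i)=(f_j\circ e_{i-1})\circ e_1$ in Step IV of Proposition \ref{s<s}), and your iteration over $m=i,\dots,n-p-2$ with the index-range checks is the intended argument.
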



It is worthwhile to use the notation $\ll_1=\langle e_1,f_1, \dots,f_{s_1}\rangle$ and $\ll_i=\langle e_i,f_{s_1+\dots+s_{i-1}+1},\dots, f_{s_1+\dots+s_i}\rangle$ for $2\leq i\leq n-p,$ where $s_1,\dots,s_{n-p}$ are non-negative integers such that $s_1+\dots+s_{n-p}=p$ and $dim(\ll_i)=s_i+1$ for $1 \leq i \leq n-p.$ The existence of $r_i>1$ in this terms is equivalent to $s_1<p.$

The distribution of the vectors $f_i$ in the natural gradation is given by the following result.

\begin{prop}\label{s<s} Let $\ll$ be an $n$-dimensional $p$-filiform Zinbiel algebra of first type. Then $0\leq s_{n-p}\leq\dots\leq s_2\leq s_1<p.$
\end{prop}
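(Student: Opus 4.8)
The statement splits into three parts: the bound $0\le s_{n-p}$, which is immediate since the $s_i$ are non-negative by construction; the bound $s_1<p$; and the monotonicity $s_{i+1}\le s_i$ for $1\le i\le n-p-1$. The bound $s_1<p$ simply records non-splitness: if $s_1=p$ then every $f_i$ lies in $\ll_1$, i.e. $r_i=1$ for all $i$, and the preceding lemma exhibits $\ll$ as the split algebra $NF_{n-p}\oplus\mathbb{C}^p$, the degenerate case set aside in the classification. Hence the genuine content is the monotonicity, and since $\dim\ll_i=s_i+1$ this is equivalent to $\dim\ll_{i+1}\le\dim\ll_i$ for every $i$. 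Because $\ll$ is naturally graded we have $\ll_{i+1}=\ll_1\circ\ll_i$, so it suffices to produce, for each $i$, a surjective linear map $\ll_i\to\ll_{i+1}$.

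The plan is to take right multiplication by $e_1$, namely $T=R_{e_1}\colon\ll_i\to\ll_{i+1}$, $T(x)=x\circ e_1$, as the candidate surjection. First I would record a recursion valid for every $u\in\ll$: applying the Zinbiel identity to $u\circ e_{j+1}=u\circ(e_1\circ e_j)$ and using $e_j\circ e_1=j\,e_{j+1}$ gives $(j+1)(u\circ e_{j+1})=(u\circ e_1)\circ e_j$, whence $R_{e_{j+1}}=\frac{1}{j+1}R_{e_j}R_{e_1}$ and, by iteration, $R_{e_j}=\frac{1}{j!}T^{\,j}$. Thus right multiplication by any basis vector of the $e$-chain is a scalar multiple of a power of the single operator $T$, which reduces the entire computation of products to understanding $T$. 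In particular $T(e_i)=i\,e_{i+1}$, so $T$ already maps onto the line $\langle e_{i+1}\rangle\subseteq\ll_{i+1}$, while $T(f)=f\circ e_1$ on the $f$-vectors.

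Next I would carry out the reduction. Writing $\ll_{i+1}=\ll_1\circ\ll_i=\langle e_1\rangle\circ\ll_i+\langle f_1,\dots,f_{s_1}\rangle\circ\ll_i$ and noting $e_1\circ\ll_i=\langle e_{i+1}\rangle=T(\langle e_i\rangle)$, surjectivity of $T$ reduces to showing that every product $f_k\circ z$, with $f_k$ a generator in $\ll_1$ and $z\in\ll_i$, already lies in $\ll_i\circ e_1$. I would prove this by induction on $i$, writing $z=w\circ v$ with $w\in\ll_1$, $v\in\ll_{i-1}$ (possible since $\ll_i=\ll_1\circ\ll_{i-1}$) and expanding $f_k\circ(w\circ v)=(f_k\circ w)\circ v-f_k\circ(v\circ w)$ via the Zinbiel identity: the first term sits in $\ll_2\circ\ll_{i-1}$, where the recursion above rewrites right multiplication by elements of the $e$-chain through powers of $T$, and the second term is again of the form $(\text{element of }\ll_1)\circ(\text{element of }\ll_i)$, to be absorbed by the inductive hypothesis. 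Throughout, Lemma \ref{lemma} keeps the supports of the $f_k\circ e_m$ under control, and the already-proved inequality $r_s\le s$ restricts where the $f$-vectors may occur.

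The hard part will be the honest treatment of products of two $f$-vectors, that is terms such as $f_k\circ f$ with $f\in\ll_i$, together with the base case $i=1$, where $z\in\ll_1$ may itself be an $f$-vector and the factorization $z=w\circ v$ is unavailable. These are exactly the products not reachable by the clean $T$-recursion, and one must still show they land in $\langle e_{i+1}\rangle+\ll_i\circ e_1$; this is where the Zinbiel identity must be combined most delicately with the degree restrictions $r_s\le s$ and with Lemma \ref{lemma}, and it is the step I expect to require the most care. Once surjectivity of $T\colon\ll_i\to\ll_{i+1}$ is established for every $i$, the inequalities $\dim\ll_{i+1}\le\dim\ll_i$, and hence $s_{i+1}\le s_i$, follow immediately.
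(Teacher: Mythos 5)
Your overall strategy for the monotonicity is in fact the paper's own: the paper's Step IV rests on the identity $i\,(f_j\circ e_i)=(f_j\circ e_{i-1})\circ e_1$, i.e.\ precisely on the fact that right multiplication $R_{e_1}$ carries a spanning set of $\ll_i$ onto a spanning set of $\ll_{i+1}$, and your recursion $R_{e_j}=\frac{1}{j!}R_{e_1}^{\,j}$ is a clean repackaging of the computations behind that. The treatment of $s_1<p$ via the splitting lemma also matches the paper's (implicit) argument. But there is a genuine gap exactly at the point you flag and postpone: the products $f_k\circ f_l$ of two $f$-vectors. For surjectivity of $R_{e_1}\colon\ll_i\to\ll_{i+1}$ you must show these products lie in $\langle e_{i+1}\rangle+\ll_i\circ e_1$, and the tools you propose for this step --- the Zinbiel identity, Lemma \ref{lemma}, and the bound $r_s\le s$ --- are provably insufficient. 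The Zinbiel identity only yields the antisymmetry of the $e_2$-coefficients of $f_k\circ f_l$ (the paper's Step II); it places no constraint whatsoever on components of $f_k\circ f_l$ along the $f$-generators of $\ll_2$, and such components are exactly what would destroy surjectivity, since they are not in the image of $R_{e_1}$. Moreover, your inductive absorption of the term $f_k\circ(v\circ w)$ only works when $v\circ w$ is proportional to $w\circ v$, which holds along the $e$-chain but not when $w$ is an $f$-generator, so the induction cannot even be set in motion in the base case $i=1$.

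To see that the gap is real and not just a matter of care, let $B$ be the $3$-dimensional algebra with $f_1\circ f_2=-f_2\circ f_1=f_3$ and all other products zero; all products of $B$ land in its annihilator, so $B$ is a Zinbiel algebra, and $NF_{n-p}\oplus B$ is naturally graded, satisfies $r_s\le s$ and Lemma \ref{lemma}, and is compatible with your recursion, yet $f_1\circ f_2\notin\langle e_2\rangle$ and $\dim\ll_2=2>\dim$ of the span of $R_{e_1}(\ll_1)$. What rules this algebra out is not any identity but the invariant itself: $C(e_1+f_1)=(n-p,2,1)>(n-p,1,1,1)$ in the lexicographic order of Definition \ref{def:char.seq}, so this algebra is simply not $p$-filiform. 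This is the paper's key idea that your proposal is missing (its Step III): assuming some $f_k\circ f_l$ has a nonzero component $\eta_{kl}^t$ along an $f$-generator, one perturbs the characteristic vector to $e_1'=Ae_1+Bf_k$ and checks that $L_{e_1'}$ then has rank greater than $n-p-1$, forcing a Jordan structure lexicographically larger than $(n-p,1,\dots,1)$ --- a contradiction with $p$-filiformity. Only after this (together with Step I, killing the $e$-components of $f_j\circ e_q$ and the products $e_q\circ f_j$) does the surjectivity argument you and the paper both use go through. Without an argument invoking maximality of the characteristic sequence, rather than algebra identities alone, the proposal does not prove the proposition.
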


\begin{proof} We have divided the proof into four steps.
\begin{itemize}
\item{Step $I$:} First of all we will deal with the proof of the following affirmations, where the two first will be proved by induction on $q$
$$ \left\{
\begin{array}{ll}
e_q\circ f_j=0,  &  1\leq q \leq n-p, \ 1\leq j \leq s_1\\
f_j\circ e_q\in \langle f_{s_1+1},\dots,f_p\rangle, &  1\leq q \leq n-p-2, \ 1\leq j \leq s_1\\
f_j\circ e_{n-p-1} \in \langle e_{n-p},f_{s_1+1},\dots,f_p\rangle, & 1\leq j \leq s_1\\
\end{array}\right. \eqno(3)$$

For $q=1$ it is clear that $e_1 \circ f_j=0$ for $1 \leq j \leq p$ by properties of nilpotence. Moreover by means of the natural gradation it is true that $f_j \circ e_1 \in \ll_2$ for $1 \leq j \leq s_1,$ then
 $$\displaystyle f_j\circ e_1=\alpha_j e_2 +\sum_{i=1}^{s_2}\beta_{j,i}f_{s_1+i}, \quad 1 \leq j \leq s_1.$$

 Let us see $\alpha_j=0 \hbox{ for any } j.$ Since $\displaystyle 0=(e_1\circ f_j)\circ e_1=e_1\circ (f_j\circ e_1)+e_1\circ (e_1\circ f_j)=e_1\circ (\alpha_j\circ e_2) +\sum_{i=1}^{s_2}\beta_{i,j}f_{s_1+i})=\alpha_j e_3,$ hence $\alpha_j=0$ for $1\leq j \leq s_1.$
Therefore, we obtain $\displaystyle f_j \circ e_1=\sum_{i=1}^{s_2}\beta_{j,i}f_{s_1+i}\in \langle f_{s_1+1},\dots,f_{s_1+s_2}\rangle$ for $1\leq j \leq s_1.$

Assuming $(3)$ to hold for $q$, we will prove it for $q+1.$ Then we obtain
 $$e_{q+1}\circ f_j=(e_1\circ e_q)\circ f_j=e_1\circ (e_q\circ f_j)+e_1\circ (f_j\circ e_q)=\sum_{i=1}^{s_2}\beta_{j,i}e_1 \circ f_{s_1+i}=0.$$

Moreover the property $f_j \circ e_{q+1} \in \langle f_{s_1+1}, \dots,f_p \rangle$ is obtained by considering the following product:
$0=(e_1\circ f_j)\circ e_{q+1}=e_1\circ (f_j\circ e_{q+1})+e_1\circ (e_{q+1}\circ f_j)=e_1\circ (f_j\circ e_{q+1}).$ Therefore it is clear that $f_j\circ e_{q+1}\in L_{e_1}$, that is $f_j\circ e_{q+1}\in \langle e_{n-p}, f_{s_1+1},\dots,f_p\rangle$ for $1\leq j \leq s_1.$ Note that, if $q+1\neq n-p-1,$ then $f_j\circ e_{q+1}\in \langle f_{s_1+1},\dots,f_p\rangle.$

 By similar way we prove that $e_{n-p-1}\circ f_j=0$ and $f_j\circ e_{n-p-1}\in \langle e_{n-p}, f_{s_1+1},\dots,f_p\rangle$ for $1\leq j \leq s_1.$  Obviously $e_{n-p}\circ f_j=0$ because $e_{n-p} \in Cent(\ll).$ Hence $(3)$ is proved.

 \

 \

 \item{Step $II$:} The goal in this step is to check the antisymmetric property of the products $f_i \circ f_j.$ Let us denote $\displaystyle f_i\circ f_j=\gamma_{ij} e_2+\sum_{t=1}^{p-s_1} \eta_{ij}^t f_{t+s_1},\ \hbox{ for } 1\leq i,j\leq s_1,$ then we can write
 $$\begin{array}{ll}
 0&=(e_1 \circ f_i)\circ f_j=e_1\circ (f_i\circ f_j)+e_1\circ (f_j\circ f_i)=\\
 &=\gamma_{ij}e_1 \circ e_2+ \displaystyle\sum_{t=1}^{p-s_1}\eta_{ij}^t e_1 \circ f_{t+s_1}+ \gamma_{ji}e_1 \circ e_2 + \displaystyle \sum_{t=1}^{p-s_1}\eta_{ji}^t e_1 \circ f_{t+s_1}=\\
 &=(\gamma_{ij}+ \gamma_{ji})e_3,
 \end{array}$$
and so $\gamma_{ij}=-\gamma_{ji}$ holds for $1 \leq i,j \leq s_1.$

\

\

\item{Step $III$:} Our next objective is to prove that $f_i \circ f_j \in \langle e_2 \rangle$  $\hbox{for all } i,j,$ that is, $s_2 \leq s_1.$ On the contrary, suppose that there exists $i,j,t\in \mathbb{N}$ such that $\eta_{ij}^t\neq 0,$ in the case $1 \leq i,j \leq s_1.$ Then making the basis transformation $e_1'=Ae_1+Bf_i$ with $A \neq 0$ and $B\neq 0$ we obtain the
  following linearly independent vectors:
  \begin{align*}
  e_2'&=e_1'\circ e_1'=A^2e_2+g_1(A,B,f_{s_1+1},\dots,f_p),\\
  e_i'&=e_1'\circ e_{i-1}'=A^ie_i+g_{i-1}(A,B,f_{s_1+1},\dots,f_p),
  \end{align*}
  where $g_{i-1}$ is a polynomial with $deg_A(g_{i-1})\leq i-1$ for $3 \leq i \leq n-p.$  Note that, $e_1'\circ e_{n-p}'=0$ and
  $\displaystyle e_1'\circ f_j=(Ae_1+Bf_i)\circ f_j=Ae_1 \circ f_j+Bf_i\circ f_j=B\gamma_{ij}e_2+B\sum_{t=1}^{p-s_1}\eta_{ij}^tf_{s_1+t}.$
  If $\eta_{ij}^t\neq 0,$ then the rank of $L_{e_1'}$ would be greater than $n-p-1,$ which contradicts the assumption $C(\ll)=(n-p,1\dots,1).$ Hence,
$$f_i\circ f_j=-f_j\circ f_i=\gamma_{ij}e_2 \ \hbox{ for }\ 1 \leq i,j \leq s_1.\eqno(4)$$

Analogously one can establish
$$f_i \circ f_j =-f_j\circ f_i \in \langle e_{k+1} \rangle \hbox{ for }  1\leq i\leq s_1 \hbox{ and } s_1+\dots+s_{k-1}+1\leq j \leq s_1+\dots+s_k.\eqno(5)$$

Now from $(3),(4)$ and $(5)$ we have $\ll_2=\ll_1\circ \ll_1=\langle e_2,f_1\circ e_1,\dots,{f_{s_1}}\circ e_1\rangle \subseteq \langle e_2,f_{s_1+1}, \dots, f_{p}\rangle$
therefore, $\dim \ll_2\leq \dim \ll_1=s_1+1$ and $s_2\leq s_1.$

\

\

\item{Step $IV$:} It is worthwhile to check $s_1+\dots+s_k < p$ yields $s_{k+1} \neq 0$ to prove the proposition. On the one hand let us see for $k=1.$ Indeed,
if $s_2=0$ it is clear that  $f_i\circ e_1=0$ for $1 \leq i \leq s_1.$ Moreover by Lemma \ref{lemma} we assert $f_j\circ e_i=0$ for all
$1\leq i\leq n-p \ \hbox{ and }\ 1\leq j\leq s_1.$ Hence $\ll_3=\ll_1\circ \ll_2=\langle e_1,f_1,\dots,{f_{s_1}}\rangle \circ \langle e_2\rangle =\langle e_3\rangle,$
which implies $\ll_i= \langle e_i\rangle$ for $3 \leq i \leq n-p.$ From these assertions one concludes that $s_1=p,$ which is not possible.

On the other hand, let us prove that $s_1+\dots+s_k<p$ yields $s_{k+1}\neq 0,$ since otherwise $\ll_{k+1}=\langle e_{k+1}\rangle.$ As $\ll_{k+1}=\ll_1\circ \ll_{k}$ we obtain
  $f_i\circ e_{k}=\nu_ie_{k+1}, 1\leq i\leq s_1.$ Then by using (3) we obtain
  $\nu_i e_{k+2}=e_1\circ (f_i\circ e_{k})=(e_1\circ f_i)\circ e_{k}-e_1\circ (e_{k}\circ f_i)=0,$ which implies $\nu_i=0,$ i.e., $f_i\circ e_{k}=0$
  for all $1\leq i \leq s_1.$ Now by Lemma \ref{lemma} we have $f_i\circ e_{j}=0$ for $1\leq i \leq s_1 \ \hbox{ and } \ k\leq j\leq  n-p-1.$ Thus
  $\ll_{k+2}=\ll_1\circ \ll_{k+1}=\langle e_{k+2} \rangle, \ \ll_{k+3}=\langle e_{k+3}\rangle, \dots$ and $ \ll_{n-p}=\langle e_{n-p} \rangle.$
  However, if we had these equalities the vector $f_{p}$ would not be obtained.

Since $\ll_{i+1}=\ll_1\circ \ll_{i}=\langle e_{i+1},f_1\circ e_i,\dots,f_{s_1}\circ e_i\rangle$ and $i(f_j\circ e_i)=(f_j\circ e_{i-1})\circ e_1$ for
 $1\leq j \leq s_1,$ it follows that $\dim (\langle f_1\circ e_i,\dots,{f_{s_1}}\circ e_i\rangle) \leq \dim (\langle f_1\circ e_{i-1},\dots,
 {f_{s_1}}\circ e_{i-1}\rangle),$ that is, $\dim \ll_{i+1}\leq \dim \ll_i$ for $1\leq i \leq n-p-1.$ Hence it is proved that $s_{n-p}\leq \dots\leq s_2\leq s_1,$ which is the desired conclusion.
 \end{itemize}
\end{proof}

Note that we have actually proved that the natural gradation of our algebras is as follows:
$$\ll_1=\langle e_1,f_1,\dots,f_{s_1}\rangle, \  \ll_2= \langle e_2,f_{s_1+1},\dots,f_{s_1+s_2}\rangle, \dots, \ \ll_i=\langle e_i,f_{s_{i-1}+1},\dots,f_{s_{i-1}+s_i}\rangle$$ and we have written $f_i \circ f_j= \alpha_{ij}e_2$ for $1 \leq i,j \leq s_1.$

\section{Main Result}
Due to the above section we actually know the gradation of our algebra and we have obtained information on some structural constants. Next results complete
the information on the structural constants come to there are only one family in the classification of these kind
of Zinbiel algebras.

The following proposition presents additional multiplication law for Zinbiel algebra from Proposition \ref{s<s}.

\begin{prop}\label{fe1} Under the above assumptions the following multiplications hold for all $1\leq t \leq n-p$:
$$f_{s_1+\cdots+s_{t-1}+i}\circ e_1=\left\{ \begin{array}{cl}
f_{s_1+\dots+s_t+i} &\textrm{ for }1\leq i \leq s_{t+1}\\
0 & \textrm{ for } s_{t+1}+1\leq i \leq s_t\\
\end{array} \right.$$
\end{prop}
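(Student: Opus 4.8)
The plan is to study the right multiplication $\rho\colon x\mapsto x\circ e_1$ on the span $\ll_t^f$ of the $f$-vectors of each component $\ll_t$, and then to choose the basis $\{f_i\}$ so that the asserted formula becomes a bookkeeping identity. The only input I would draw from $e_1$ being a characteristic vector is $e_1\circ f_i=0$ for all $i$, together with the known products $e_1\circ e_j=e_{j+1}$, $e_j\circ e_1=je_{j+1}$ and the fact (relations $(4),(5)$ in Proposition \ref{s<s}) that a product of two $f$'s lies in the $e$-part. First I would show that $f\circ e_1$ always lands in $\ker L_{e_1}$: applying the Zinbiel identity to $(e_1\circ f)\circ e_1$ and using $e_1\circ f=0$ gives $0=e_1\circ(f\circ e_1)$, so $f\circ e_1\in\ker L_{e_1}=\langle e_{n-p},f_1,\dots,f_p\rangle$. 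Since also $f\circ e_1\in\ll_{t+1}$ by the gradation, for $t+1<n-p$ this forces $f\circ e_1\in\ll_{t+1}^f$; that is, the coefficient of $e_{t+1}$ vanishes automatically, because $e_{t+1}\notin\ker L_{e_1}$.

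I expect the delicate point, and the main obstacle, to be the top degree $t+1=n-p$, where $e_{n-p}$ itself lies in $\ker L_{e_1}$. There $f\circ e_1$ is constrained only to lie in the whole of $\ll_{n-p}$ and may genuinely carry an $e_{n-p}$-term, exactly as relation $(3)$ permits. I would resolve this not by forcing that term to vanish, but by observing that $f\circ e_1$ is still a kernel vector of $L_{e_1}$ and hence a legitimate degree-$(n-p)$ basis vector of $f$-type; selecting the degree-$(n-p)$ generators among such (possibly impure) kernel vectors leaves the single Jordan block $e_1,\dots,e_{n-p}$ of $L_{e_1}$ intact, so the characteristic sequence is unaffected.

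The second ingredient is that $\rho$ maps $\ll_t^f$ \emph{onto} $\ll_{t+1}^f$. Writing $\ll_{t+1}=\ll_1\circ\ll_t$ and discarding the terms $e_1\circ f=0$ (for $f$ an $f$-vector of $\ll_t$) and the products of two $f$'s, which lie in $\langle e_{t+1}\rangle$, the space $\ll_{t+1}^f$ is spanned by the vectors $f_j\circ e_t$ with $1\le j\le s_1$. The key computation is the recursion $t\,(u\circ e_t)=(u\circ e_{t-1})\circ e_1$, which I would obtain by expanding $u\circ(e_{t-1}\circ e_1)$ with the Zinbiel identity and inserting $e_{t-1}\circ e_1=(t-1)e_t$ and $e_1\circ e_{t-1}=e_t$. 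Applied to $u=f_j$ it exhibits each generator $f_j\circ e_t=\frac1t\,(f_j\circ e_{t-1})\circ e_1$ as $\rho$ of the element $f_j\circ e_{t-1}\in\ll_t^f$; hence $\ll_{t+1}^f\subseteq\rho(\ll_t^f)\subseteq\ll_{t+1}^f$, so $\rho\colon\ll_t^f\to\ll_{t+1}^f$ is surjective with kernel of dimension $s_t-s_{t+1}$.

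Finally the formula is a choice of basis. Using surjectivity of the iterates, the kernels $K_d:=\ker\big(\rho^{\,d-1}|_{\ll_1^f}\big)$ form an ascending flag with $\dim K_d=s_1-s_d$, and I would pick $f_1,\dots,f_{s_1}$ adapted to it, so that $K_d=\langle f_{s_d+1},\dots,f_{s_1}\rangle$ for every $d$. Defining the higher vectors by $f_{s_1+\cdots+s_{t-1}+i}:=\rho^{\,t-1}f_i$ for $1\le i\le s_t$ (these are independent because $f_1,\dots,f_{s_t}$ are complementary to $K_t$), one gets $f_{s_1+\cdots+s_{t-1}+i}\circ e_1=\rho^{\,t}f_i$, which equals $f_{s_1+\cdots+s_t+i}$ when $1\le i\le s_{t+1}$ (since then $f_i\notin K_{t+1}$) and equals $0$ when $s_{t+1}+1\le i\le s_t$ (since then $f_i\in K_{t+1}$). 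This is exactly the asserted identity, and the one thing left to check is that this single choice made in degree $1$ is simultaneously compatible with all degrees, which holds because $\rho$ transports the index $i$ unchanged.
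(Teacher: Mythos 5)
Your construction is the paper's own argument in different clothing: the kernel flag $K_d=\ker\bigl(\rho^{\,d-1}|_{\ll_1^f}\bigr)$ packages into one global choice exactly the chain of permutations, redefinitions $f_{\mathrm{new}}:=f_{\mathrm{old}}\circ e_1$ and corrections that the paper performs degree by degree in its proof, and your surjectivity step via $t\,(u\circ e_t)=(u\circ e_{t-1})\circ e_1$ is the same computation the paper already uses in Step IV of Proposition \ref{s<s}. So the route is not genuinely different; the question is whether the step you yourself single out as delicate is closed, and it is not. The gap is the equality $\dim K_{n-p}=s_1-s_{n-p}$, equivalently the claim that $\rho$ has rank exactly $s_{n-p}$ on $\ll_{n-p-1}^f$, equivalently that $e_{n-p}\notin\rho(\ll_{n-p-1}^f)$. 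In lower degrees this follows from your observation that $\rho$ maps into $\ker L_{e_1}$, because $e_{t+1}\notin\ker L_{e_1}$ when $t+1<n-p$; in the top degree that observation gives nothing, precisely because $e_{n-p}\in\ker L_{e_1}$. Your proposed fix (admit ``impure'' kernel vectors as degree-$(n-p)$ generators) only covers the case where $\rho(\ll_{n-p-1}^f)$ is some $s_{n-p}$-dimensional complement of $\langle e_{n-p}\rangle$. If instead $e_{n-p}$ lies in the image, then the image is all of $\ll_{n-p}$, the kernel of $\rho$ on $\ll_{n-p-1}^f$ has dimension $s_{n-p-1}-s_{n-p}-1$, and no adapted basis can have $s_{n-p-1}-s_{n-p}$ of its degree-$(n-p-1)$ vectors multiply to zero, as the statement requires.

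This case is not excludable, so the hole cannot be patched by a better choice of basis. Take $n-p=4$, $s_1=s_2=s_3=2$, $s_4=1$ (so $n=11$, $p=7$); let $e_i\circ e_j=C^{j}_{i+j-1}e_{i+j}$, let $\rho$ act by the two chains $f_1\mapsto f_3\mapsto f_5\mapsto f_7\mapsto 0$ and $f_2\mapsto f_4\mapsto f_6\mapsto e_4\mapsto 0$, and set $f\circ e_j:=\frac{1}{j!}\rho^{\,j}(f)$, with all products $e_i\circ f_j$ and $f_i\circ f_j$ equal to zero. The only nontrivial instances of the Zinbiel identity are the triples $(f,e_i,e_j)$, where both sides equal $\frac{1}{i!\,j!}\rho^{\,i+j}(f)$ by Pascal's rule $C^{j}_{i+j-1}+C^{j-1}_{i+j-1}=C^{j}_{i+j}$; so this is a Zinbiel algebra. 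It is naturally graded with $\ll_1=\langle e_1,f_1,f_2\rangle$, $\ll_2=\langle e_2,f_3,f_4\rangle$, $\ll_3=\langle e_3,f_5,f_6\rangle$, $\ll_4=\langle e_4,f_7\rangle$, and it is $7$-filiform of first type: every left multiplication $L_x$ annihilates all the $f_j$ and $e_4$, hence has rank at most $3$, so $C(\ll)=(4,1,\dots,1)$ with $e_1$ a characteristic vector. It satisfies everything established through Proposition \ref{s<s}; yet $\rho$ is injective on $\langle f_5,f_6\rangle$ with image $\langle f_7,e_4\rangle=\ll_4$, so the conclusion of the present proposition fails for \emph{every} adapted basis. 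For what it is worth, the paper's own proof conceals the same unproved assertion: the claim that $\langle f_{s_1+\dots+s_m+n_1}\circ e_1,\dots,f_{s_1+\dots+s_m+n_{s_{m+2}}}\circ e_1\rangle$ coincides with the span of the old top-degree $f$'s, for $m+2=n-p$, is exactly the rank statement above. So your proposal reproduces the paper's proof faithfully, including the one step that is actually unjustified; but a complete treatment would have to either rule out $e_{n-p}\in\rho(\ll_{n-p-1}^f)$ under extra hypotheses or enlarge the statement to accommodate algebras like the one above.
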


\begin{proof}

\

Since $\ll_2=\ll_1\circ \ll_1 =\langle e_2,f_1 \circ e_1,\dots, f_{s_2}\circ e_1,\dots, f_{s_1}\circ e_1 \rangle,$  there exist $1\leq n_1,\dots,n_{s_2}\leq s_1$
such that $\langle f_{n_1}\circ e_1,\dots,f_{n_{s_2}}\circ e_1 \rangle = \langle f_{s_1+1},\dots,f_{s_1+s_2}\rangle.$

On the one hand consider  the basis permutation $f^{(1)}_i=f_{n_i},\  \ f_{n_i}^{(1)}=f_i$ for all $1\leq i \leq s_2,$ where the remaining vectors stay unchanged. Then $
\langle f^{(1)}_1\circ e_1,\dots,f^{(1)}_{s_2}\circ e_1\rangle =\langle f_{s_1+1},\dots,f_{s_1+s_2}\rangle.$

 On the other hand we can make the following basis transformation:
$$f^{(1)}_{s_1+i}=f^{(1)}_i \circ e_1 \textrm{ for } 1\leq i \leq s_2.$$
Finally, if $\displaystyle f_{i}^{(1)}\circ e_1=\sum_{k=1}^{s_2} \alpha_k f_{s_1+k}^{(1)}$ for $s_2+1\leq i \leq s_1,$ let us make next transformation
 $$f^{(2)}_{i}=f^{(1)}_{i}-\displaystyle \sum_{k=1}^{s_2} \alpha_k f_{s_1+k}^{(1)} \hbox{ for } s_2+1\leq i \leq s_1.$$ Hence we obtain
$f^{(2)}_{i}\circ e_1=\displaystyle f^{(1)}_{i}\circ e_1-\sum_{k=1}^{s_2} \alpha_k f_{s_1+k}^{(1)}\circ e_1=0.$

Once these basis transformations are done, we can assert
$$f_{i}\circ e_1=\left\{ \begin{array}{cl}
f_{s_1+i} &\textrm{ for }1\leq i \leq s_{2}\\
0 & \textrm{ for } s_{2}+1\leq i \leq s_1\\
\end{array}\right.$$

Analogously, by means of appropriate basis transformation we come to
$$f_{s_1+\cdots+s_{t-1}+i}\circ e_1=\left\{ \begin{array}{cl}
f_{s_1+\dots+s_t+i} &\textrm{ for }1\leq i \leq s_{t+1}\\
0 & \textrm{ for } s_{t+1}+1\leq i \leq s_t\\
\end{array}\right.$$
for all $1\leq t \leq m$ and $1 \leq m\leq n-p-2.$

Moreover since
$$\begin{array}{l}
\langle e_{m+2},f_{s_1+\dots+s_{m+1}+1},\dots,
f_{s_1+\dots+s_{m+2}}\rangle=\ll_{m+2}\supseteq \ll_{m+1}\circ \ll_1=\\
=\langle e_{m+2},f_{s_1+\dots+s_m+1}\circ e_1,\dots, f_{s_1+\dots+s_m+s_{m+1}}\circ e_1\rangle
\end{array}$$ there exist $n_1,\dots,n_{s_{m+2}}$ with $1\leq n_1,\dots,n_{s_{m+2}}\leq s_{m+1}$
such that
$$\langle f_{s_1+\dots+s_m+n_1}\circ e_1,\dots,f_{s_1+\dots+s_m+{n_s}_{m+2}}\circ e_1 \rangle = \langle f_{s_1+\dots+s_{m+1}+1},\dots,
f_{s_1+\dots+s_{m+2}}\rangle.$$

It suffices to make some basis transformations more to prove the lemma. Once the following vectors are changed
$$f^{(1)}_j=f_{n_j},\, f^{(1)}_{s_1+j}=f_{s_1+n_j},\dots,f^{(1)}_{s_1+\dots+s_m+j}=f_{s_1+\dots+s_m+n_j},$$
$$f^{(1)}_{n_j}=f_j,\, f^{(1)}_{s_1+n_j}=f_{s_1+j},\dots,f^{(1)}_{s_1+\dots+s_m+n_j}=f_{s_1+\dots+s_m+j},$$
for $1\leq j \leq s_{i+1},$ we obtain
$$f^{(1)}_{s_1+\dots+s_{k-1}+i}\circ e_1=f^{(1)}_{s_1+\dots+s_k+i} \textrm{ for }
1\leq i \leq s_{k+1} \hbox{ and } 1\leq k \leq m.$$

Therefore $\langle f^{(1)}_{s_1+\dots+s_m+1}\circ e_1,\dots,f^{(1)}_{s_1+\dots+s_m+s_{m+2}}\circ e_1 \rangle = \langle f_{s_1+\dots+s_{m+1}+1},\dots,
f_{s_1+\dots+s_{m+2}}\rangle.$

After this other one $f_{s_1+\dots+s_{m+1}+i}^{(1)}=
f^{(1)}_{s_1+\dots+s_m+i}\circ e_1$ for $1\leq i \leq s_{m+2}$  we have
$$f^{(1)}_{s_1+\dots+s_{k-1}+i}\circ e_1=f^{(1)}_{s_1+\dots+s_k+i} \textrm{ for }
1\leq i \leq s_{k+1} \hbox{ and for all }  \ 1\leq k \leq m+1.$$

As $n_1,\dots,n_{s_m+2}<s_{m+1}$ the following zero multiplications stay unchanged:
$$f_{s_1+\dots+s_{k-1}+s_{k+1}+i}\circ e_1=0 \textrm{ for }
1\leq i \leq s_k-s_{k+1}\ \hbox{ and for all } \ 1\leq k \leq m.$$

Now if $f_{s_1+\dots+s_{m}+i}\circ e_1=\displaystyle \sum_{k=1}^{s_{m+2}} \alpha_k
f_{s_1+\dots+s_{m+1}+k}$ for $s_{m+2}+1\leq i \leq s_{m+1}$ we take
$$f_{s_1+\dots+s_{m}+i}^{(1)}=f_{s_1+\dots+s_{m}+i}-
\displaystyle\sum_{k=1}^{s_{m+2}} \alpha_k f_{s_1+\dots+s_{m}+k}$$
yields
$f_{s_1+\dots+s_{m}+i}^{(1)}\circ e_1=0 \ \hbox{ for } \ s_{m+2}+1\leq i \leq s_{m+1}.$
\end{proof}

The following result is the generalization of the previous proposition.

\begin{prop}\label{fej} Let $\ll$ be an $n$-dimensional $p$-filiform Zinbiel algebra of first type. Then
$$f_{s_1+\dots+s_k+i}\circ e_j=\left\{
\begin{array}{cl}
\frac{1}{j!}f_{s_1+\dots+s_{k+j}+i} &\hbox{ for }\  1\leq i \leq s_{k+j+1}\\
0 & \hbox{ for } \ s_{k+j+1}+1\leq i \leq s_{k+j}\\
\end{array}\right.$$ when $1\leq j \leq n-p-k.$
\end{prop}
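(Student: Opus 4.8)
The plan is to induct on $j$. The base case $j=1$ is exactly Proposition \ref{fe1} read with $t=k+1$: since $\frac{1}{1!}=1$ and the thresholds $s_{k+2}$, $s_{k+1}$ obtained there coincide with those of the present statement for $j=1$, the two formulas are identical.

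For the inductive step I would first isolate a reduction identity turning an $e_{j+1}$-multiplication into an $e_1$-multiplication followed by an $e_j$-multiplication. Write $x=f_{s_1+\dots+s_k+i}$ and use $e_{j+1}=e_1\circ e_j$. Applying the Zinbiel identity to $(x\circ e_1)\circ e_j$ gives
$$(x\circ e_1)\circ e_j=x\circ(e_1\circ e_j)+x\circ(e_j\circ e_1).$$
By Lemma \ref{lemma1} one has $e_j\circ e_1=je_{j+1}$, so the right-hand side equals $x\circ e_{j+1}+j(x\circ e_{j+1})=(j+1)(x\circ e_{j+1})$, whence
$$x\circ e_{j+1}=\frac{1}{j+1}\,(x\circ e_1)\circ e_j.$$
This expresses the unknown product in terms of the base case and the induction hypothesis.

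Now I would verify the formula for $j+1$ on its stated domain $1\le i\le s_{k+j+1}$. By the monotonicity $s_{n-p}\le\dots\le s_2\le s_1$ of Proposition \ref{s<s} we have $s_{k+j+1}\le s_{k+2}$, hence $i\le s_{k+2}$, so Proposition \ref{fe1} yields $x\circ e_1=f_{s_1+\dots+s_{k+1}+i}\in\ll_{k+2}$. Applying the induction hypothesis to this vector (that is, the statement with $k$ replaced by $k+1$) and multiplying by $\frac{1}{j+1}$ produces $\frac{1}{(j+1)!}f_{s_1+\dots+s_{k+j+1}+i}$ for $1\le i\le s_{k+j+2}$ and $0$ for $s_{k+j+2}+1\le i\le s_{k+j+1}$, which is precisely the asserted multiplication for $j+1$.

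The step demanding the most care is the bookkeeping of the index ranges: one must confirm, using the monotonicity of $(s_i)$, that staying within the domain $i\le s_{k+j+1}$ of the $(j+1)$-claim automatically places $i$ in the nonzero branch $i\le s_{k+2}$ of Proposition \ref{fe1}, so that the base case and the induction hypothesis compose cleanly without falling outside their respective ranges; and one should check that the boundary case $k+j=n-p$, where $\ll_{k+j+1}=0$ forces the product to vanish, is consistent with reading $s_{k+j+1}=0$. Once these ranges are matched, the remaining arithmetic (the collapse $\frac{1}{j+1}\cdot\frac{1}{j!}=\frac{1}{(j+1)!}$ and the index shift $k+j\mapsto k+j+1$) is routine.
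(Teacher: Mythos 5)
Your proposal is correct and follows essentially the same route as the paper: induction on $j$ with Proposition \ref{fe1} as the base case, the Zinbiel identity applied to $e_{j+1}=e_1\circ e_j$ together with $e_j\circ e_1=je_{j+1}$ to obtain $(j+1)\,x\circ e_{j+1}=(x\circ e_1)\circ e_j$, and then the induction hypothesis with $k$ shifted to $k+1$. Your treatment of the index ranges (via the monotonicity $s_{n-p}\leq\dots\leq s_1$) is in fact more careful than the paper's, which leaves that bookkeeping implicit.
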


\begin{proof}
We proceed by induction on $j.$ For $j=1$ the products are directly obtained by Proposition \ref{fe1}. Assume that
$$f_{s_1+\dots+s_k+i}\circ e_j=\left\{
\begin{array}{cl}
\frac{1}{j!}f_{s_1+\dots+s_{k+j}+i} &\hbox{ for } \ 1\leq i \leq s_{k+j+1}\\
0 & \hbox{ for } \  s_{k+j+1}+1\leq i \leq s_{k+j}\\
\end{array}\right.$$  when $1\leq j \leq n-p-k$ and for each $2 \leq k \leq n-p-2$. Note that for $j \geq 2$ and $n-p-1 \leq k \leq n-p$ the product $f_{s_1+\dots+s_k+i}\circ e_j=0$ by properties of the natural gradation. We will prove these products for $j+1.$

$$f_{s_1+ \dots+ s_k+i} \circ e_{j+1}=f_{s_1+ \dots+ s_k+i} \circ (e_1 \circ e_j)=(f_{s_1+ \dots+ s_k+i} \circ e_1) \circ e_j- f_{s_1+ \dots+ s_k+i}(e_j \circ e_1).$$ Due to Proposition \ref{prode} we can write:
$$f_{s_1+ \dots+ s_k+i} \circ e_{j+1}=f_{s_1+ \dots+ s_{k+1}+i} \circ e_j - j f_{s_1+ \dots+ s_k+i} \circ e_{j+1}$$
and by induction hypothesis we have:
 $$(1+j)f_{s_1+ \dots+ s_k+i} \circ e_{j+1}=f_{s_1+ \dots+ s_{k+1}+i} \circ e_j =\left\{\begin{array}{cl}
\frac{1}{j!}f_{s_1+\dots+s_{k+j+1}+i} & \hbox{ for } \ 1\leq i \leq s_{k+j+2}\\
0 & \hbox{ for } \ s_{k+j+2}+1\leq i \leq s_{k+j+1}\\
\end{array}\right.$$
Therefore we obtain
$$f_{s_1+ \dots+ s_k+i} \circ e_{j+1}\left\{\begin{array}{cl}
\frac{1}{(j+1)!}f_{s_1+\dots+s_{k+j+1}+i} & \hbox{ for } \ 1\leq i \leq s_{k+j+2}\\
0 & \hbox{ for } \ s_{k+j+2}+1\leq i \leq s_{k+j+1}\\
\end{array}\right.$$
and the lemma is proved.
\end{proof}

The following lemmas present the rest of multiplications among the elements of the basis.
\begin{lem} Let $\ll$ be an $n$-dimensional $p$-filiform Zinbiel algebra of first type. Then $f_i\circ f_j =0$ for $1\leq i \leq s_1$ and $s_{n-p-1}+1\leq j \leq s_1.$
\end{lem}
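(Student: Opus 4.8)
The plan is to exploit the multiplication $f_i\circ f_j=\alpha_{ij}e_2$ (valid for all $1\leq i,j\leq s_1$, as recorded right after Proposition \ref{s<s}) and to show that the hypothesis $j>s_{n-p-1}$ forces the scalar $\alpha_{ij}$ to vanish. The whole argument reduces to multiplying this identity on the right by the vector $e_{n-p-2}$ and evaluating both sides by means of the Zinbiel identity.

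First I would establish the key vanishing $f_j\circ e_{n-p-2}=0$ for every $j$ with $s_{n-p-1}+1\leq j\leq s_1$. When $s_{n-p-1}+1\leq j\leq s_{n-p-2}$ this is immediate from Proposition \ref{fej} read in degree one (its case $k=0$), whose second branch returns $0$ exactly on this range of indices. For the remaining values $j>s_{n-p-2}$, the monotonicity $0\leq s_{n-p}\leq\dots\leq s_1$ provides a smallest index $m\leq n-p-3$ with $s_{m+1}<j\leq s_m$; Proposition \ref{fej} then gives $f_j\circ e_m=0$, and Lemma \ref{lemma} propagates this to $f_j\circ e_{m+1}=\dots=f_j\circ e_{n-p-1}=0$, so in particular $f_j\circ e_{n-p-2}=0$ in every case.

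Next I would apply the Zinbiel identity to the triple $(f_i,f_j,e_{n-p-2})$:
\[
(f_i\circ f_j)\circ e_{n-p-2}=f_i\circ(f_j\circ e_{n-p-2})+f_i\circ(e_{n-p-2}\circ f_j).
\]
On the right, the first summand vanishes by the previous step and the second vanishes because $e_{n-p-2}\circ f_j=0$ by $(3)$. On the left, substituting $f_i\circ f_j=\alpha_{ij}e_2$ and using Lemma \ref{lemma1} to compute $e_2\circ e_{n-p-2}=C^{n-p-2}_{n-p-1}e_{n-p}=(n-p-1)e_{n-p}$, I obtain $\alpha_{ij}(n-p-1)e_{n-p}=0$. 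Since $n-p\geq 4$ ensures $n-p-1\neq 0$ and $e_{n-p}\neq 0$, this forces $\alpha_{ij}=0$, that is $f_i\circ f_j=0$.

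The only delicate point is the first step: Proposition \ref{fej}, taken literally, records $f_j\circ e_{n-p-2}$ only for indices $j$ up to $s_{n-p-2}$, so for $j>s_{n-p-2}$ one must fall back on the monotonicity of the $s_i$ together with Lemma \ref{lemma} to secure the vanishing; everything afterwards is a one-line use of the Zinbiel identity. It is worth noting that the choice $m=n-p-2$ is the natural one, being the largest index for which $e_2\circ e_m\neq 0$, and its threshold $s_{m+1}=s_{n-p-1}$ is precisely the cut-off appearing in the statement.
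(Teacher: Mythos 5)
Your proof is correct and follows essentially the same route as the paper: both right-multiply $f_i\circ f_j=\gamma_{ij}e_2$ by a suitable $e_q$, annihilate the product via the Zinbiel identity (using $f_j\circ e_q=0$ from Proposition \ref{fej} together with $e_q\circ f_j=0$), and conclude $\gamma_{ij}=0$ because $e_2\circ e_q$ is a nonzero multiple of a basis vector. The only difference is that the paper chooses $q$ adaptively, namely the unique $q$ with $s_{q+1}+1\leq j\leq s_q$, so that Proposition \ref{fej} applies directly and Lemma \ref{lemma} is never needed, whereas you fix $q=n-p-2$ and invoke Lemma \ref{lemma} to propagate the vanishing when $j>s_{n-p-2}$.
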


\begin{proof} From $(4)$ we have $f_i\circ f_j=\gamma_{ij}e_2$ for $1\leq i,j\leq s_1.$ If we consider $s_{n-p-1}+1\leq j\leq s_1,$ then there exists $1\leq q \leq n-p-2$ such that $s_{q+1}+1\leq j\leq s_q.$ Moreover, by Lemma \ref{fej} we have $f_j\circ e_q=0.$ Therefore, for any $1\leq i \leq s_1$ we have
$$\gamma_{ij} C^{2}_{i+j+1} e_{q+2}= \gamma_{ij}e_2\circ e_q= (f_i\circ f_j)\circ e_q=f_i\circ (f_j\circ e_q)+f_i\circ (e_q\circ f_j)=0,$$ which implies $\gamma_{ij}=0$ for $1\leq i \leq s_1,$ $s_{q+1}+1\leq j \leq s_q$ and the lemma follows.

\end{proof}


\begin{lem} \label{lema1} Let $\ll$ be an algebra under the above conditions. Then
$$\begin{array}{l}
1) \ e_i \circ f_j=0 \hbox{ for } 1 \leq i \leq n-p \hbox{ and } 1 \leq j \leq p,\\
2) \ f_i \circ f_{s_1+i}=0 \hbox{ for } 1 \leq i \leq s_2.
\end{array}$$
\end{lem}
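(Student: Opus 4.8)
The statement asserts two things: part 1) that all left multiplications $e_i \circ f_j$ vanish, and part 2) that $f_i \circ f_{s_1+i} = 0$ for $1 \le i \le s_2$. Both are essentially computations exploiting the Zinbiel identity $(x \circ y) \circ z = x \circ (y \circ z) + x \circ (z \circ y)$ together with the multiplication table already assembled in Propositions \ref{fe1} and \ref{fej}, the antisymmetry $f_i \circ f_j = -f_j \circ f_i = \gamma_{ij}e_2$ from $(4)$, and the gradation structure. I will take part 1) first, then part 2).

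For part 1), note that the key structural fact is $e_1 \circ f_j = 0$ for all $j$, which holds by nilpotence (the $f_j$ sit in the kernel of $L_{e_1}$ by definition of first type). The plan is to prove $e_i \circ f_j = 0$ by induction on $i$, exactly as was done in Step $I$ of Proposition \ref{s<s} for the special case $1 \le j \le s_1$; here I only need to extend the argument to all $1 \le j \le p$. Writing $e_{i+1} = e_1 \circ e_i$ and applying the Zinbiel identity gives
$$e_{i+1}\circ f_j = (e_1 \circ e_i)\circ f_j = e_1 \circ (e_i \circ f_j) + e_1 \circ (f_j \circ e_i).$$
The first summand vanishes by the inductive hypothesis, and the second is a combination of terms $e_1 \circ f_{s_1+\dots}$ and $e_1 \circ e_{\text{something}}$; using $e_1 \circ f = 0$ together with the known form of $f_j \circ e_i$ from Proposition \ref{fej} (which lands in spans of $f$'s plus possibly $e_{n-p}$, and $e_1 \circ e_{n-p}=0$), this second summand also collapses. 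The base case $e_1 \circ f_j = 0$ is immediate, so the induction closes.

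For part 2), the goal is $f_i \circ f_{s_1+i} = 0$ for $1 \le i \le s_2$. Here I recall from Proposition \ref{fe1} that $f_{s_1+i} = f_i \circ e_1$ for $1 \le i \le s_2$ (this is precisely the normalization achieved there). The natural approach is to compute $f_i \circ f_{s_1+i} = f_i \circ (f_i \circ e_1)$ and invoke the Zinbiel identity in the form
$$(f_i \circ f_i)\circ e_1 = f_i \circ (f_i \circ e_1) + f_i \circ (e_1 \circ f_i).$$
The right-hand side's last term vanishes since $e_1 \circ f_i = 0$, so $f_i \circ f_{s_1+i} = (f_i \circ f_i)\circ e_1$. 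By the antisymmetry $(4)$, $f_i \circ f_i = \gamma_{ii}e_2$ with $\gamma_{ii} = -\gamma_{ii} = 0$, hence $f_i \circ f_i = 0$ and the product vanishes.

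The main obstacle I anticipate is part 2), specifically making sure the identity $f_{s_1+i} = f_i \circ e_1$ is being used on the correct index range and that the case distinctions of Proposition \ref{fe1} (where some $f_i \circ e_1$ are zero rather than a new $f$) do not interfere — but since part 2) restricts to $1 \le i \le s_2$, these are exactly the indices for which $f_i \circ e_1 = f_{s_1+i}$ genuinely holds, so the argument stays clean. Part 1) is routine once one notices it is just the Step $I$ induction extended beyond $s_1$, with all the arising right-multiplication terms already controlled by Proposition \ref{fej}.
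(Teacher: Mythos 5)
Your proof is correct and is essentially the argument the paper intends: the paper's own ``proof'' is only the one-line remark that Proposition \ref{s<s}, Proposition \ref{fe1} and nilpotency make the claim obvious, and your write-up simply supplies those details --- the induction $e_{i+1}\circ f_j=e_1\circ(e_i\circ f_j)+e_1\circ(f_j\circ e_i)=0$ (valid because, by Proposition \ref{fej} and $(3)$, every product $f_j\circ e_i$ lies in the span of the $f$'s plus possibly $e_{n-p}$, all of which are annihilated by $L_{e_1}$), and the computation $f_i\circ f_{s_1+i}=f_i\circ(f_i\circ e_1)=(f_i\circ f_i)\circ e_1-f_i\circ(e_1\circ f_i)=0$ using the normalization of Proposition \ref{fe1} and the antisymmetry $(4)$, which forces $f_i\circ f_i=0$. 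No gaps; your index-range caution in part 2) ($1\leq i\leq s_2$ is exactly where $f_i\circ e_1=f_{s_1+i}$) is also correct.
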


\begin{proof}
Proposition \ref{s<s}, Proposition \ref{fe1} and the properties of the nilpotent algebras make obvious the proof.

%
\end{proof}

\begin{lem} \label{lema2}
Under the above conditions, the following products are true:
$$f_{s_1+ \dots + s_t+i} \circ f_{s_1 + \dots +s_k+j}=\gamma_{ij}(t+k+1)!e_{t+k+2}$$
for $ \ 0 \leq k+t \leq n-p-2, \ 1 \leq i \leq s_{t+2} \hbox{ and } \ 1 \leq j \leq s_{k+2}.$
\end{lem}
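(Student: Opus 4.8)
The plan is to induct on the total degree $t+k$, using the defining Zinbiel identity to peel off one factor $e_1$ and thereby lower the degree of one of the two arguments, until we land on the already-known product of two generators of $\ll_1$. Throughout I abbreviate $a=s_1+\dots+s_t+i$ and $b=s_1+\dots+s_k+j$, and I use freely that $e_1\circ f_r=0$ for all $r$, that $e_m\circ e_1=m\,e_{m+1}$ (established in Lemma \ref{lemma1}), and the monotonicity $s_{n-p}\le\dots\le s_2\le s_1$ from Proposition \ref{s<s}, which guarantees $i\le s_{t+2}\le s_{t+1}$ and $j\le s_{k+2}\le s_{k+1}$, so that every index shift produced by Proposition \ref{fe1} stays inside the admissible range. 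The base case $t+k=0$ is exactly $f_i\circ f_j=\gamma_{ij}e_2$ for $1\le i,j\le s_2$, which is covered by $(4)$ since $s_2\le s_1$.

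For the inductive step with $t+k\ge1$, the main reduction handles the case $k\ge1$: I write $f_b=f_{b'}\circ e_1$ with $b'=s_1+\dots+s_{k-1}+j$ (legitimate by Proposition \ref{fe1}, as $j\le s_{k+1}$) and apply the Zinbiel identity in the form $x\circ(y\circ z)=(x\circ y)\circ z-x\circ(z\circ y)$ with $x=f_a$, $y=f_{b'}$, $z=e_1$. Since $e_1\circ f_{b'}=0$, this collapses to $f_a\circ f_b=(f_a\circ f_{b'})\circ e_1$, where $f_a\circ f_{b'}$ is the product at parameters $(t,k-1)$. The induction hypothesis gives $f_a\circ f_{b'}=\gamma_{ij}(t+k)!\,e_{t+k+1}$, and multiplying on the right by $e_1$ with $e_m\circ e_1=m\,e_{m+1}$ yields $\gamma_{ij}(t+k)!\,(t+k+1)\,e_{t+k+2}=\gamma_{ij}(t+k+1)!\,e_{t+k+2}$, as claimed.

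It remains to treat the boundary case $k=0$ (so $t\ge1$), the only place where a single reduction does not suffice. Here I first peel $e_1$ off the left factor: writing $f_a=f_{a'}\circ e_1$ with $a'=s_1+\dots+s_{t-1}+i$ and applying the Zinbiel identity with $x=f_{a'}$, $y=e_1$, $z=f_j$, the vanishing of $e_1\circ f_j$ gives $f_a\circ f_j=f_{a'}\circ(f_j\circ e_1)=f_{a'}\circ f_{s_1+j}$, a product at parameters $(t-1,1)$. Applying the main reduction once more (now with second index $1\ge1$) turns this into $(f_{a'}\circ f_j)\circ e_1$, and since $f_{a'}\circ f_j$ sits at parameters $(t-1,0)$ with strictly smaller total degree $t-1$, the induction hypothesis together with one use of $e_m\circ e_1=m\,e_{m+1}$ again produces exactly $\gamma_{ij}(t+1)!\,e_{t+2}$. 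I expect the main obstacle to be not any single computation but the index bookkeeping: keeping each shifted index $s_1+\dots+s_r+i$ inside its block and confirming, via the monotonicity of the $s_r$, that Proposition \ref{fe1} genuinely shifts rather than returns $0$ at every step. The degree count $\ll_{t+1}\circ\ll_{k+1}\subseteq\ll_{t+k+2}$ is what forces the target to be a multiple of $e_{t+k+2}$, and the repeated $\circ e_1$ steps are precisely what assemble the factorials $(t+k+1)!$.
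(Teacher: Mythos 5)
Your proof is correct and is essentially the same as the paper's: both peel off factors of $e_1$ via the Zinbiel identity together with $e_1\circ f=0$ and Proposition \ref{fe1}, reduce to the base case $f_i\circ f_j=\gamma_{ij}e_2$, and assemble the factorial via $e_m\circ e_1=m\,e_{m+1}$. The only difference is bookkeeping: you run a single induction on the total degree $t+k$ with a boundary case at $k=0$, whereas the paper first proves the $t=0$ case by induction on $k$ and then converts left shifts into right shifts using the same identity $(f_i\circ e_1)\circ f_b=f_i\circ(f_b\circ e_1)$ that you use in your boundary case.
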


\begin{proof}
We first prove that $f_{i} \circ f_{s_1 + \dots s_k+j}=\gamma_{ij}(k+1)!e_{k+2}, \ 0 \leq k\leq n-p-2, \ 1 \leq i \leq s_{1}, \ 1 \leq j \leq s_{k+2},$ by using induction on $k.$ For $k=1$ we have:
$$f_i \circ f_{s_1+j}=f_i \circ (f_j \circ e_1)=(f_i \circ f_j)\circ e_1 - f_i \circ (e_1 \circ f_j)= \gamma_{ij}e_2 \circ e_1= 2\gamma_{ij}e_3$$ and
$f_{s_1+j}\circ f_i=(f_j \circ e_1)\circ f_i=f_j \circ (e_1 \circ f_i)+f_j \circ (f_i \circ e_1)=f_j \circ f_{s_1+i}=2\gamma_{ji}e_3=-2\gamma_{ij}e_3.$
Hence we conclude by Lema \ref{lema1} that $f_i \circ f_{s_1+j}=2 \gamma_{ij}e_3$ for $1 \leq j \leq s_2$ and $s_2+1 \leq i \leq s_1.$

Assume $f_{i} \circ f_{s_1 + \dots +s_k+j}=\gamma_{ij}(k+1)!e_{k+2}$ for $1 \leq k\leq n-p-2, \ 1 \leq i \leq s_{1},$ and $ 1 \leq j \leq s_{k+1};$ we will prove it for $k+1,$ that is:
$$\begin{array}{ll}
f_{i} \circ f_{s_1 + \dots +s_{k+1}+j}&=f_i \circ (f_{s_1+ \dots + s_k+j} \circ e_1)=(f_i \circ f_{s_1+ \dots + s_k+j})\circ e_1= \gamma_{ij}(k+1)!e_{k+2}\circ e_1=\\
&=\gamma_{ij}(k+1)!(k+2)e_{k+3}.
\end{array}$$

It is remains to prove that $f_{s_1+ \dots + s_t+i} \circ f_{s_1 + \dots +s_k+j}=\gamma_{ij}(t+k+1)!e_{t+k+2}$ for $1 \leq j \leq s_{k+2}$ and $s_{t+2}+1 \leq i \leq s_1,$ but this is trivial by considering the product $ f_{s_1 + \dots s_{k+1}+j} \circ f_i.$

Finally let us see $f_{s_1+i} \circ f_{s_1 + \dots +s_{k}+j}=\gamma_{ij}(k+2)!e_{k+3}.$
$$f_{s_1+i} \circ f_{s_1 + \dots +s_{k}+j}=(f_i \circ e_1) \circ f_{s_1 + \dots +s_{k}+j}=f_i \circ (f_{s_1 + \dots +s_{k}+j} \circ e_1)=f_i \circ f_{s_1 + \dots+ s_{k+1}+j}=\gamma_{ij}(k+2)!e_{k+3}.$$
Thus, the proof is completed.
\end{proof}

\begin{lem} \label{lema3} Under the condition stated above,
$f_i \circ f_j=-f_j \circ f_i \ \hbox{ for all } \ 1\leq i,j \leq p.$
\end{lem}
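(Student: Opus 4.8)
The plan is to prove the identity by induction on the homogeneous degree $\mu$ of the first factor, i.e. on the index $\mu$ for which $f_a\in\ll_\mu$ (every $f$ is homogeneous in the natural gradation). The base case $\mu=1$ is already contained in formulas $(4)$ and $(5)$ of Proposition \ref{s<s}: there it is established that $f_i\circ f_j=-f_j\circ f_i$ whenever $f_i\in\ll_1$ and $f_j$ lies in an arbitrary component $\ll_k$. Hence it only remains to carry out the inductive step.

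For the step, fix $f_a\in\ll_\mu$ with $\mu\ge2$ and an arbitrary $f_b$. The structural input I would use is Proposition \ref{fe1} together with the inequalities $s_{n-p}\le\cdots\le s_1$ of Proposition \ref{s<s}. Writing $f_a=f_{s_1+\dots+s_{\mu-1}+i}$ with $1\le i\le s_\mu$, the inequality $s_\mu\le s_{\mu-1}$ guarantees that $g:=f_{s_1+\dots+s_{\mu-2}+i}$ is a genuine basis vector of $\ll_{\mu-1}$, and Proposition \ref{fe1} gives $f_a=g\circ e_1$.

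Next I would ``peel off'' this $e_1$ using the Zinbiel identity together with the relation $e_1\circ f=0$, valid for every $f$ by the definition of an algebra of first type. Two applications of $(x\circ y)\circ z=x\circ(y\circ z)+x\circ(z\circ y)$ yield
$$f_a\circ f_b=(g\circ e_1)\circ f_b=g\circ(f_b\circ e_1)=(g\circ f_b)\circ e_1,$$
and symmetrically
$$f_b\circ f_a=f_b\circ(g\circ e_1)=(f_b\circ g)\circ e_1,$$
where in each reduction the term containing $e_1\circ(\cdot)$ drops out. Adding the two expressions gives
$$f_a\circ f_b+f_b\circ f_a=\bigl(g\circ f_b+f_b\circ g\bigr)\circ e_1 .$$
Since $g\in\ll_{\mu-1}$, the induction hypothesis applied to the pair $(g,f_b)$ gives $g\circ f_b+f_b\circ g=0$, and therefore $f_a\circ f_b+f_b\circ f_a=0$, which closes the induction.

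The only genuinely delicate point I anticipate is the representation $f_a=g\circ e_1$ with $g$ an honest basis vector of $\ll_{\mu-1}$; this is exactly where the monotonicity $s_\mu\le s_{\mu-1}$ (so that the index $i$ remains admissible one level down) and the precise propagation formula of Proposition \ref{fe1} are indispensable. Once that is in place the argument is purely formal: everything reduces to two uses of the Zinbiel identity and the repeated vanishing $e_1\circ f=0$, so that no case analysis on the relative positions of $f_a$ and $f_b$ is needed.
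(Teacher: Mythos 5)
Your proof is correct, and it takes a genuinely different route from the paper's. The paper does not argue by induction on the degree of the first factor; instead it first proves a weight-shifting identity, $f_{s_1+\dots+s_k+i}\circ f_i=(f_{s_1+\dots+s_{k-1}+i}\circ e_1)\circ f_i=f_{s_1+\dots+s_{k-1}+i}\circ f_{s_1+i}$ (built on the same $e_1$-peeling mechanism you use: the Zinbiel identity plus $e_1\circ f=0$), slides every product $f_{s_1+\dots+s_t+i}\circ f_{s_1+\dots+s_k+j}$ into the range covered by Lemma \ref{lema2}, and reads off the explicit value $\gamma_{ij}(t+k+1)!\,e_{t+k+2}$; antisymmetry is then inherited from $\gamma_{ij}=-\gamma_{ji}$, which was established in Step II of Proposition \ref{s<s}. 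Your argument bypasses Lemma \ref{lema2} entirely: after writing $f_a=g\circ e_1$ (Proposition \ref{fe1} together with the monotonicity $s_\mu\le s_{\mu-1}$, which you rightly single out as the crux, since it guarantees that the index $i$ is still admissible in degree $\mu-1$), you factor the $e_1$ out to the right of the symmetrized product, obtaining $f_a\circ f_b+f_b\circ f_a=(g\circ f_b+f_b\circ g)\circ e_1$, and close by the induction hypothesis; the actual values of the products are never needed, only the base antisymmetry (4)--(5). What your route buys is a shorter, self-contained proof with essentially no index bookkeeping; what the paper's route buys is that the explicit constants $\gamma_{ij}(t+k+1)!\,e_{t+k+2}$ are needed anyway for the classification in Theorem \ref{classif}, so there the antisymmetry comes as a nearly free corollary of work already done in Lemma \ref{lema2}. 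Both proofs ultimately rest on the same two inputs, namely the antisymmetry of the degree-one constants $\gamma_{ij}$ and the $e_1$-peeling consequence of the Zinbiel identity, but your organization of the induction is new relative to the paper.
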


\begin{proof}
Due to the previous results we only need to prove that the products
 $f_{s_1+ \dots + s_t+i} \circ f_{s_1+ \dots + s_k+j}$ are antisymmetric for  $1 \leq k,t \leq n-p-1,$ $1 \leq i \leq s_{t+1}$ and $1 \leq j \leq s_{k+1}.$ In this direction it is worthwhile to check the following property:
$$f_{s_1+ \dots + s_{k}+i} \circ f_i=f_{s_1+ \dots + s_{k-1}+i}\circ f_{s_1+i} \hbox{ for } 1 \leq i \leq s_k+1.$$

Since $f_{s_1+ \dots + s_{k}+i} \circ f_i=(f_{s_1+ \dots + s_{k-1}+i} \circ e_1) \circ f_i=f_{s_1+ \dots + s_{k-1}+i} \circ (f_i \circ e_1)  =f_{s_1+ \dots + s_{k-1}+i}\circ f_{s_1+i},$ for $1 \leq i \leq s_{k+1},$ hence $f_{s_1+ \dots + s_t+i} \circ f_{s_1+ \dots + s_k+j}=f_{s_1+ \dots + s_m+i} \circ f_{s_1+ \dots + s_{t+k-m}+j}.$ According to Lemma \ref{lema2} we assert that this expression equals to $\gamma_{ij}(m+t+k-m+1)!e_{m+t+k-m+2}=\gamma_{ij}(t+k+1)!e_{t+k+2}.$ Therefore
$$f_{s_1+ \dots + s_t+i} \circ f_{s_1+ \dots + s_k+j}=\gamma_{ij}(t+k+1)!e_{t+k+2}=-\gamma_{ji}(t+k+1)!e_{t+k+2}$$
with $1 \leq i,j \leq s_1.$
Finally by using again Lemma \ref{lema2}, it follows the proof.
\end{proof}

\begin{thm}\label{classif}
Let $\ll$ be a naturally graded $p$-filiform $n$-dimensional Zinbiel algebra of first type such that $n-p\geq 4$. Then $\ll$ is isomorphic to one algebra of the family $M^{\alpha} (s_1,\dots, s_{n-p},r):$
$$\left\{
\begin{array}{ll}
e_i \circ e_j=C_{i+j-1}^{j}e_{i+j},& 2\leq i+j \leq n-p\\[2mm]
f_i \circ e_1=f_{s_1+i},& 1\leq i\leq s_2\\[2mm]
f_{{s_1+\cdots+s_k}+i} \circ e_1=f_{s_1+\cdots+s_{k+1}+i},& 1\leq i\leq s_{k+2}, \ 1\leq k \leq n-p-2\\[2mm]
f_i \circ f_{i+1}=\alpha e_2,& 1\leq i\leq r-1\\[2mm]
f_{s_1+\cdots+s_k+i} \circ e_j=\frac{1}{j!} f_{s_1+\cdots + s_{k+j}+i},& 1\leq i\leq s_{k+j+1}, \ 1 \leq k+j \leq n-p-1 \\[2mm]
f_i \circ f_{s_1+\cdots+s_k+i+1}=\alpha (k+1)! e_{k+2},& 1\leq i\leq s_1\\[2mm]
f_{s_1+\cdots+s_k+i} \circ f_{s_1+\cdots+s_t+i+1}=\alpha (k+t+1)! e_{k+t+2},& 2\leq k+t\leq r-1,\ \ 1\leq i\leq r-1
\end{array}
\right.$$
with $\alpha\in\{0,1\}.$
\end{thm}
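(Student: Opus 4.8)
The plan is to collect all the multiplication data already established in the preceding lemmas and propositions, verify that they exhaust every product of basis vectors, and then perform a final normalization of the one remaining free structure constant. By Proposition~\ref{s<s} the gradation has the claimed shape with $0\le s_{n-p}\le\dots\le s_2\le s_1<p$; by Lemma~\ref{lemma1} and the definition of first type we already have $e_i\circ e_j=C^{j}_{i+j-1}e_{i+j}$ and $e_1\circ f_i=0$; by Lemma~\ref{lema1} we have $e_i\circ f_j=0$ for all admissible $i,j$; by Proposition~\ref{fe1} and Proposition~\ref{fej} we have the $f\circ e$ products; and by Lemma~\ref{lema2} together with Lemma~\ref{lema3} we have the $f\circ f$ products expressed through the antisymmetric constants $\gamma_{ij}$. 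So the whole law is determined once the $\gamma_{ij}$ are understood, and the substance of the theorem is to show that, after a basis change, all of them are governed by a single scalar $\alpha\in\{0,1\}$ and a single ``length'' parameter $r$.

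\textbf{Reducing the $\gamma_{ij}$.} The first real step is to analyze the skew-symmetric matrix $(\gamma_{ij})_{1\le i,j\le s_1}$ coming from $f_i\circ f_j=\gamma_{ij}e_2$ in (4). I would treat this as a bilinear form on $\langle f_1,\dots,f_{s_1}\rangle$ and bring it to canonical form by a linear change of the generators $f_1,\dots,f_{s_1}$ (one that is compatible with the grading, so that it does not disturb the products already normalized in Propositions~\ref{fe1}--\ref{fej}). Because a nonzero alternating form can be put into a standard block shape, and because the associated Zinbiel constraints force a very rigid pattern, I expect the outcome to be that $\gamma_{ij}$ is nonzero only on consecutive indices $\{i,i+1\}$, producing exactly the products $f_i\circ f_{i+1}=\alpha e_2$ for $1\le i\le r-1$, where $r$ records how many such consecutive pairs survive. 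The parameter $r$ is thus defined as the size of the nondegenerate part of the form, and $\alpha$ is the single surviving value of the constant, which by rescaling $e_1$ (hence $e_2$) can be normalized to $\alpha\in\{0,1\}$: if the form is zero then $\alpha=0$, otherwise a scaling makes the nonzero entry equal to $1$.

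\textbf{Propagating to higher blocks.} Once the base-level constants are normalized, every remaining product involving $f$'s must be checked to agree with the displayed family. This is exactly what Lemma~\ref{lema2} does: it shows $f_{s_1+\dots+s_t+i}\circ f_{s_1+\dots+s_k+j}=\gamma_{ij}(t+k+1)!\,e_{t+k+2}$, so the higher products are forced multiples of the base constants $\gamma_{ij}$ with the combinatorial factor $(t+k+1)!$. Substituting the normalized base values $\gamma_{i,i+1}=\alpha$ then yields precisely the two displayed relations $f_i\circ f_{s_1+\dots+s_k+i+1}=\alpha(k+1)!\,e_{k+2}$ and $f_{s_1+\dots+s_k+i}\circ f_{s_1+\dots+s_t+i+1}=\alpha(k+t+1)!\,e_{k+t+2}$. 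Lemma~\ref{lema3} guarantees the overall antisymmetry so no independent ``other half'' of products appears. I would then tabulate all basis products $x\circ y$ for $x,y$ ranging over $\{e_i\}\cup\{f_j\}$ and confirm that each one is covered by one of the seven lines of $M^{\alpha}(s_1,\dots,s_{n-p},r)$, which establishes the isomorphism.

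\textbf{Main obstacle.} The delicate point is not any single computation but the normalization of the alternating form $(\gamma_{ij})$ while keeping the grading and the already-fixed $f\circ e$ products intact. A general symplectic change of basis would scramble the carefully chosen generators of Proposition~\ref{fe1}, so the hard part is to exhibit a grading-preserving change of the first-block generators that simultaneously (i) diagonalizes the form into consecutive $2\times 2$ blocks and (ii) is consistent with the Zinbiel identity constraints that couple $\gamma_{ij}$ to the downstream products; one must verify that such a change exists and does not reintroduce nonzero constants elsewhere. I would spend the bulk of the argument justifying this reduction and checking that it forces the single-parameter form $\alpha e_2$ on consecutive indices, after which the remaining verifications are routine applications of the lemmas already proved.
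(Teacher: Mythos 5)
Your overall framing coincides with the paper's: after Proposition~\ref{s<s}, Propositions~\ref{fe1} and~\ref{fej}, and Lemmas~\ref{lema1}--\ref{lema3}, the whole multiplication table is determined by the skew-symmetric constants $\gamma_{ij}$, $1\le i,j\le s_{n-p-1}$, so the theorem reduces to normalizing this matrix by admissible changes of basis. But that normalization \emph{is} the proof of the theorem, and it is exactly the step you do not supply: you write that you ``expect the outcome to be'' the chain pattern and that you ``would spend the bulk of the argument justifying this reduction.'' The paper carries it out explicitly: assuming some $\gamma_{i_0j_0}\neq 0$, it arranges $\gamma_{12}\neq 0$, kills the rest of the first row by $f_i'=\gamma_{12}f_i-\gamma_{1i}f_2$ for $3\le i\le s_{n-p-1}$ (a change that is compatible with all previously normalized products), iterates with $f_i'=\gamma_{k\,k+1}f_i-\gamma_{ki}f_{k+1}$, rescales to make the surviving constants equal to $1$, and finally argues that distinct values of $r$ give non-isomorphic algebras by comparing dimensions of centers. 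Your sketch contains no substitute for any of this.

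Moreover, the specific route you propose would not land on the stated family. The classical canonical form of a nonzero alternating bilinear form is block-diagonal with \emph{disjoint} $2\times 2$ blocks, i.e.\ nonzero entries in positions $(1,2),(3,4),(5,6),\dots$; the family $M^{\alpha}(s_1,\dots,s_{n-p},r)$ instead has the \emph{overlapping} chain $f_1\circ f_2=f_2\circ f_3=\dots=f_{r-1}\circ f_r=\alpha e_2$, i.e.\ nonzero entries in positions $(1,2),(2,3),\dots,(r-1,r)$. These are different matrices, and correspondingly your parameter $r$ (``the size of the nondegenerate part of the form,'' which is necessarily even) is not the parameter $r$ of the theorem: the chain on indices $1,\dots,r$ has rank $r$ or $r-1$ according to parity. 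This discrepancy is not cosmetic, because the transformations available here are \emph{not} arbitrary linear (in particular not arbitrary symplectic) changes of $f_1,\dots,f_{s_{n-p-1}}$: any such change must be propagated along the chains $f\mapsto f\circ e_1$ of Proposition~\ref{fe1}, and a generator whose chain terminates at level $n-p-1$ may never acquire a component along a generator whose chain reaches level $n-p$, since otherwise the normalized relations $f_j\circ e_1=0$ are destroyed. Hence the group acting on $(\gamma_{ij})$ is only a parabolic-type subgroup of $\mathrm{GL}_{s_{n-p-1}}$, under which the rank of the form need not be a complete invariant, and quoting the standard symplectic reduction simultaneously proves too little (it does not yield the chain pattern of the statement) and assumes too much (it uses basis changes that are not admissible). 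Determining which reductions survive these constraints, and showing they lead to the displayed normal form, is precisely the content of the paper's Case~2, and it is the gap in your proposal.
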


\begin{proof}
Once the structural constants $\gamma_{ij},$ with $1 \leq i,j \leq s_{n-p-1}$  are determined, the classification will be obtained due to previous propositions and lemmas. Note that it is enough determinate $\gamma_{ij}$ with $1 \leq i < j \leq s_{n-p-1},$ since in Lemma \ref{lema3} we have proved that  $\gamma_{ij}=-\gamma{ji} \ \forall i,j.$ That it, the matrix of the unknown structural constants is:

$$\left( \begin{array}{cccccc}
0 & \gamma_{12} & \gamma_{13} & \dots & \gamma_{1s_{n-p-2}} & \gamma_{1s_{n-p-1}}\\
 -\gamma_{12} & 0 & \gamma_{23} & \dots & \gamma_{2s_{n-p-2}} & \gamma_{2s_{n-p-1}}\\
- \gamma_{13} & -\gamma_{23} & 0 & \dots & \gamma_{3s_{n-p-2}} & \gamma_{3s_{n-p-1}}\\
\vdots & \vdots & \vdots & \vdots & \vdots & \vdots\\
-\gamma_{1s_{n-p-2}} & -\gamma_{2s_{n-p-2}}  & -\gamma_{3s_{n-p-2}} &  \dots & 0 & \gamma_{s_{n-p-2}s_{n-p-1}}\\
-\gamma_{1s_{n-p-1}} & -\gamma_{2s_{n-p-1}}  & -\gamma_{3s_{n-p-1}} &  \dots  & -\gamma_{s_{n-p-2}s_{n-p-1}} & 0\\
\end{array} \right)$$

\

It is useful distinguish the following cases:

\

\fbox{Case 1:} If $\gamma_{ij}=0  \hbox{ for all } i,j$ then the algebra $M^0(s_1,\dots,s_{n-p},0)$ is obtained.

\

\fbox{Case 2:} If there exist $i_0$ and $j_0$ such that $\gamma_{i_0,j_0}\neq 0,$ we proceed in the following way:

There is no loss of generality in assuming $\gamma_{12} \neq 0$ (use the basis change $f_1'=f_{i_0}$ and $f_2'=f_{j_0}$.) Let us first prove that
 $\gamma_{1i}'=0$ for $3 \leq i \leq s_{n-p-1}.$ Take the basis change:
\begin{align*}
e_i'&=e_i \ \hbox{ for }\ 1 \leq i \leq n-p,\\
f_i'&=\gamma_{12}f_i-\gamma_{1i}f_2 \ \hbox{ for } \ 3 \leq i \leq s_{n-p-1},\\
f_i'&=f_i \ \hbox{ for } \ 1 \leq i \leq 2 \ \hbox{ and } \ s_{n-p-1}+1 \leq i \leq s_1.\\
\end{align*}

The only significant changes in the law of $\ll$ with respect to the new basis are the products $f_i' \circ f_1$ and $f_1 \circ f_i'.$ When $3 \leq i \leq s_{n-p-1}$ these products are zero since:
$$f_1' \circ f_i'=f_1 \circ (\gamma_{12}f_i- \gamma_{1i}f_2)=\gamma_{12}f_1 \circ f_i - \gamma_{1i}f_1 \circ f_2=\gamma_{12}\gamma_{1i}e_2-\gamma_{1i}\gamma_{12}e_2=0.$$
As we know that $\gamma_{1i}=-\gamma_{i1},$ we conclude that $f_i' \circ f_1'$ are also zero.

Therefore the matrix of the structural constants is simplified as:
 $$\left( \begin{array}{cccccc}
0 & \gamma_{12} & 0 & \dots & 0 & 0\\
 0 & 0 & \gamma_{23} & \dots & \gamma_{2\, s_{n-p-2}} & \gamma_{2\, s_{n-p-1}}\\
 0 & -\gamma_{23} & 0 & \dots & \gamma_{3\, s_{n-p-2}} & \gamma_{3\, s_{n-p-1}}\\
\vdots & \vdots & \vdots & \vdots & \vdots & \vdots\\
0 & -\gamma_{2\, s_{n-p-2}}  & -\gamma_{3\, s_{n-p-2}} &  \dots & 0 & \gamma_{s_{n-p-2}\, s_{n-p-1}}\\
0 & -\gamma_{2\, s_{n-p-1}}  & -\gamma_{3\, s_{n-p-1}} &  \dots  & -\gamma_{s_{n-p-2}\, s_{n-p-1}} & 0
\end{array} \right).$$

Reiterating the same reasoning $s_{n-p-2}$-times by using the basis change:
$$f_i'=\gamma_{k\,k+1}f_i-\gamma_{ki}f_{k+1} \hbox{ for } k+2 \leq i \leq s_{n-p-1},$$
 we come to $\gamma_{ij} \neq 0$ if and only if $j=i+1.$


It is clear that one can take $\gamma_{i\,i+1}=1$ by using the basis change $f_i'=\frac{1}{\gamma_{i\, i+1}}f_i$ for $1 \leq i \leq s_{n-p-2}.$

In this direction, we have obtain the family $M^\alpha(s_1,\dots,s_{n-p},r),$ where $r$ is the subindex such that
$$\begin{cases}
\gamma_{j\, j+1}\neq 0 \hbox{ if }\  1 \leq j \leq r,\\
\gamma_{j\, j+1}=0 \hbox{ if } \ r < j \leq s_{n-p-2.}
\end{cases}$$

Finally we prove that all the obtained algebras are pairwise non isomorphic.

Of course, two isomorphic algebras have equal the first $n-p$ parameters. Otherwise their natural gradation would be different. Hence we are going to focus our attention in the para\-meter $r$ to study the isomorphic property.

Let us consider $M^{s_1,\dots,s_{n-p},r_1}$ and $M^{s_1,\dots,s_{n-p},r_2}$ such that $r_1 \neq r_2$ and let us denote $M^1$ and $M^2$ respectively for convenience. Suppose, contrary to our claim, that these algebras are isomorphic. Consequently $dim(Cent(M^{s_1, \dots, s_{n-p},r_1}))=dim(Cent(M^{s_1, \dots, s_{n-p},r_2})),$ that is $dim(Cent(M^1))-dim(Cent(M^2))=0.$

 By the properties of the gradation we can assert that $$\{ e_{n-p}, f_{s_1+s_2+ \dots+s_{n-p-1}+1}, \dots, f_{s_1+s_2+ \dots+s_{n-p-1}+s_{n-p}}\}$$ are in the center of the two algebras. Moreover by the law of the two algebras we deduce that
$$\begin{array}{l}
\{f_{s_2+1}, \dots, f_{s_1},f_{s_1+s_3+1},\dots, f_{s_1+s_2},f_{s_1+s_2+s_4+1}, \dots,f_{s_1+s_2+s_3}, \dots,\\
f_{s_1+s_2+ \dots+s_{n-p-3}+s_{n-p-1}+1}, \dots, f_{s_1+ s_2+ \dots+ s_{n-p-1}+s_{n-p}}\}
\end{array}$$ are elements of the center of them. Then it suffices to study if
$$f_{s_1+ \dots+ s_{n-p-2}+s_{n-p}+1}, \dots, f_{s_1+ \dots+ s_{n-p-2}+s_{n-p-1}}$$ are in the center of $M^1$ and $M^2$.

 By the properties of the gradation and the law of the considered algebras we can assert that the products of the above vectors might be no equal to zero if and only if we consider the vector $f_i$ for $1 \leq i \leq s_1.$ In other words, we only need to calculate the following products:
$$ \ f_i \circ f_{s_1+ \dots + s_{n-p-2}+s_{n-p}+j}=\gamma_{r_{k}-1\,r_k}(n-p-1)!(\gamma_{r_k-1\,r_k}\gamma_{ij}-\gamma_{r_k-1\,j}\gamma_{ir_k}-\gamma_{r_k-1\,i}\gamma_{r_k\,j})e_{n-p},
\quad (6)$$ such that  $r_k=r_1$ if we are working with $M^1,$ and $r_k=r_2$ otherwise. In order to know these products it is convenient to distinguish the following cases:

If \fbox{\textbf{ $i=r_k-1$}} all these products are equal zero since:
 $$\begin{array}{l}
 f_{r_k-1} \circ f_{s_1+ \dots + s_{n-p-2}+s_{n-p}+j}=\\[1mm]
\qquad \qquad =\gamma_{r_{k}-1\,r_k}(n-p-1)!(\gamma_{r_k-1\,r_k}\gamma_{r_k-1\,j}-\gamma_{r_k-1\,j}\gamma_{r_k-1\,r_k}-\gamma_{r_k-1\,r_k-1}\gamma_{r_k\,j})e_{n-p}=\\[1mm] \qquad \qquad =\gamma_{r_{k}-1\,r_k}(n-p-1)!(\gamma_{r_k-1\,j}-\gamma_{r_k-1\,j})e_{n-p}=0
 \end{array}$$
 for each $j.$

 \

If \fbox{\textbf{ $i \neq r_k-1$}} we can take $j=r_k$ or $j \neq r_k.$

 On the one hand if $j=r_k$ we also have every products equal zero since:
$$(\gamma_{r_k-1,r_k}\gamma_{ir_k}-\gamma_{r_k-1,r_k}\gamma_{ir_k}-\gamma_{r_k-1,i}\gamma_{r_kr_k})=\gamma_{r_k-1,r_k}\gamma_{ir_k}-\gamma_{r_k-1,r_k}\gamma_{ir_k}$$
  and we know that $i \neq r_k-1$ and $i \neq r_k+1.$

  On the other hand, when $j \neq r_k,$ it is worthwhile to consider the following cases:
 \begin{itemize}
 \item[$\bullet$] If $i=r_k,$ we conclude from $(6)$  that the product is equal to zero.

 \

 \item[$\bullet$] If $i \neq r_k$ then
$\gamma_{r_{k}-1,r_k}(n-p-1)!(\gamma_{r_k-1,r_k}\gamma_{ij}-\gamma_{r_k-1,j}\gamma_{ir_k}-\gamma_{r_k-1,i}\gamma_{r_kj}).$ As $i \neq r_k,$ $j \neq r_k$ and $i < j$ yields $\gamma_{r_k-1,j}\gamma_{ir_k}=0$ and $\gamma_{r_k-1,i}\gamma_{r_kj}=0.$ Hence the products can be rewrite as follows:
 $$f_i \circ f_{s_1+ \dots + s_{n-p-2}+s_{n-p}+j}=\gamma_{r_{k}-1,r_k}^2\gamma_{ij}(n-p-1)!e_{n-p}.$$

 Moreover we have proved that $\gamma_{ij}=0$ if $i>r_k-1$ and $j \neq i+1$ (remain $r_1$ for $M^1$ and $r_2$ for $M^2$), hence:
 $$f_i \circ f_{s_1+ \dots + s_{n-p-2}+s_{n-p}+i+1}=\gamma_{r_{k}-1,r_k}^2\gamma_{ij}(n-p-1)!e_{n-p} \neq 0 \quad \hbox{ for }1 \leq i \leq r_k-2.$$
 \end{itemize}
 Note that we have actually proved that:

$$\begin{array}{r}
<f_{s_1+ \dots+s_{n-p-2}+s_{n-p}+1}, \dots ,f_{s_1+ \dots+s_{n-p-2}+r_1-2}> \nsubseteq Cent(M^1),\\[2mm]
<f_{s_1+ \dots+s_{n-p-2}+r_1-1},f_{s_1+ \dots+s_{n-p-2}+r_1}, \dots ,f_{s_1+ \dots+s_{n-p-2}+s_{n-p-1}}> \subseteq Cent(M^1),\\[2mm]
<f_{s_1+ \dots+s_{n-p-2}+s_{n-p}+1}, \dots, f_{s_1+ \dots+s_{n-p-2}+r_2-2}> \varsubsetneq Cent(M^2),\\[2mm]
<f_{s_1+ \dots+s_{n-p-2}+r_2-1},f_{s_1+ \dots+s_{n-p-2}+r_2}, \dots ,f_{s_1+ \dots+s_{n-p-2}+s_{n-p-1}}> \subseteq Cent(M^2).
\end{array}$$

  Therefore we conclude
 $$\begin{array}{ll}
 Cent(M^1)&=<e_{n-p}, f_{s_2+1}, \dots, f_{s_1},f_{s_1+s_3+1},\dots, f_{s_1+s_2},f_{s_1+s_2+s_4+1}, \dots,\\
 & f_{s_1+s_2+s_3}, \dots,f_{s_1+s_2+ \dots+s_{n-p-3}+s_{n-p-1}+1}, \dots, f_{s_1+ s_2+ \dots+ s_{n-p-3}+s_{n-p-2}},\\
  &f_{s_1+ \dots+s_{n-p-2}+r_1-1},f_{s_1+ \dots+s_{n-p-2}+r_1}, \dots f_{s_1+ \dots+s_{n-p-2}+s_{n-p-1}},f_{s_{1}+\dots+s_{n-p-1}+1}, \dots,\\
   &f_{s_1+ \dots+s_{n-p}}>.
 \end{array}$$
and
 $$\begin{array}{ll}
  Cent(M^2)&=<e_{n-p}, f_{s_2+1}, \dots, f_{s_1},f_{s_1+s_3+1},\dots, f_{s_1+s_2},f_{s_1+s_2+s_4+1}, \dots,\\
  &f_{s_1+s_2+s_3}, \dots,f_{s_1+s_2+ \dots+s_{n-p-3}+s_{n-p-1}+1}, \dots, f_{s_1+ s_2+ \dots+ s_{n-p-3}+s_{n-p-2}},\\
  &f_{s_1+ \dots+s_{n-p-2}+r_2-1},f_{s_1+ \dots+s_{n-p-2}+r_2}, \dots f_{s_1+ \dots+s_{n-p-2}+s_{n-p-1}},f_{s_{1}+\dots+s_{n-p-1}+1}, \dots,\\
   &f_{s_1+ \dots+s_{n-p}}>.
   \end{array}$$

 It follows that $$dim(Cent(M^1))-dim(Cent(M^2))=s_{n-p-1}-r_1+2-s_{n-p-1}+r_2-2=r_2-r_1 \neq 0,$$ which contradicts the initial assumption and proves the theorem.
\end{proof}

\end{document}